   \def\MR#1{}
\newtheorem{thm}{Theorem}
\newtheorem{prop}[thm]{Proposition}
\newtheorem{cor}[thm]{Corollary}
\newtheorem{lem}[thm]{Lemma}
\newtheorem*{acknowledgement}{Acknowledgements}
\DeclareFontFamily{U}{mathx}{\hyphenchar\font45}
\DeclareFontShape{U}{mathx}{m}{n}{
      <5> <6> <7> <8> <9> <10>
      <10.95> <12> <14.4> <17.28> <20.74> <24.88>
      mathx10
      }{}
\DeclareSymbolFont{mathx}{U}{mathx}{m}{n}
\DeclareMathAccent{\widecheck}{0}{mathx}{"71}
\DeclareMathAccent{\wideparen}{0}{mathx}{"75}
\numberwithin{equation}{section}
\numberwithin{thm}{section}
\newcommand{\R}{\mathbb{R}}
\newcommand{\T}{\mathbb{T}}
\newcommand{\Z}{\mathbb{Z}}
\newcommand{\e}{e^{it\Delta}}
\newcommand{\tn}{\theta,\nu}
\newcommand{\jtg}{{j,\text{tang}}}
\newcommand{\jtr}{{j,\text{trans}}}
\DeclareMathOperator{\rd}{RapDec}
\newcommand{\eps}{\varepsilon}
\title{$L^p$ Estimates of the Maximal Schr\"odinger operator in $\R^n$}
\author{Xiumin Du}
\author{Jianhui Li}
\begin{document}

\begin{abstract}
We obtain $L^p$ estimates of the maximal Schr\"odinger operator in $\R^n$ using polynomial partitioning, bilinear refined Strichartz estimates, and weighted restriction estimates. \\
\textit{Keywords:} Schr\"odinger equation; maximal estimate; polynomial partitioning; refined Strichartz estimate; weighted restriction.
\end{abstract}

\maketitle

\section{Introduction}
Consider the solution to the free Schr\"odinger equation in $\R^{n+1}$:
$$
\e f(x) = (2\pi)^{-n}\int e^{i(x\cdot \xi + t|\xi|^2)} \hat{f}(\xi) d\xi.
$$
Carleson proposed in \cite{Ca80} the problem of finding the smallest $s$ such that the solution $\e f(x)$ converges to $f(x)$ pointwise as $t \to 0$ whenever $f\in H^s(\R^n)$. Important milestones of the problem have been established in \cites{Ca85,Co83,Sj87,Ve88,Bo95,MVV96,TV2000,Lee06,Bo12,LR17,LR19.2,Bo16,DGL17,DGLZ18,DZ19}. The problem has been solved except for the endpoint. We summarize the best-known results in the table below.

\begin{table}[h!]
\begin{tabular}{|l|l|l|}
\hline
$n$ & Necessity of $s \geq \frac{n}{2(n+1)}$ & Sufficiency of $s > \frac{n}{2(n+1)}$ \\ \hline
$1$      & Dahlberg and Kenig \cite{DK82} & Carleson\tablefootnote{Carleson also proved the sufficiency of the endpoint case $s = \frac{1}{4}$ when $n=1$.} \cite{Ca80}         \\ \hline
$2$      & \multirow{2}{*}{Bourgain \cite{Bo12}}           & Du, Guth and Li \cite{DGL17} \\ \cline{1-1} \cline{3-3} 
$\geq 3$ &           & Du and Zhang \cite{DZ19}     \\ \hline
\end{tabular}
\caption{The best-known result to the Carleson's problem.}
\label{}
\end{table}

As a common strategy to approach pointwise convergence, Stein's maximal principle reduces the qualitative problem to proving a quantitative $L^p$ estimate on the maximal operator for some $p\geq 1$:
\begin{equation}
    \left \|\sup_{0<t\leq 1} |e^{it\Delta}f| \right\|_{L^p(B^{n}(0,1))} \lesssim \|f\|_{H^s(\R^n)}.
\end{equation}
Here, $B^n(0,R)$ denotes the ball of radius $R$ in $\R^n$ centered at the origin. By Littlewood--Paley decomposition, parabolic rescaling, and a localization argument \cite{Lee06}, in the case $p\geq 2$ it can be further reduced to estimates of the form:

\begin{equation}\label{eq:intro_Lp_maximal_s}
    \left\|\sup_{0<t\leq R}|\e f|\right\|_{L^p(B^{n}(0,R))} \lesssim R^{s - n\left  ( \frac{1}{2} -\frac{1}{p}\right )}\|f\|_{2}, \quad \forall R \geq 1,
\end{equation}
and for all $f \in L^2$ Fourier supported on $B^{n}(0,1)$. 

Estimates of the form \eqref{eq:intro_Lp_maximal_s} with $s \geq \frac{n}{2(n+1)}$ and $s - n  ( \frac{1}{2} -\frac{1}{p} ) \geq 0 $ were believed to be true by testing with standard examples used in restriction theory. This is indeed the case when $n=1$ \cite{KPV91}, and $n=2$ \cite{DGL17} up to a factor of $R^{\varepsilon}$ loss. More precisely, \eqref{eq:intro_Lp_maximal_s} holds with $(n,s,p)= (1,1/4,4), (2,1/3^+,3)$, which implies sharp estimates for all $p$. However, in higher dimensions only the $L^2$ sharp estimate has been established \cite{DZ19}: up to a factor of $R^{\varepsilon}$ loss, \eqref{eq:intro_Lp_maximal_s} holds with $p=2$ and $s=\frac{n}{2(n+1)}$. Moreover, Kim, Wang, Zhang, and the first author proved in \cite{DKWZ20} by looking at Bourgain's counterexample \cite{Bo12} in every intermediate dimension that 

\begin{equation}\label{eq:intro_Lp_maximal}
    \left\|\sup_{0<t\leq R}|\e f|\right\|_{L^p(B^{n}(0,R))} \lesssim_\varepsilon R^{\varepsilon} \|f\|_{2}
\end{equation}
fails if
\begin{equation}
    p < p_{ness} : = \max_{1\leq m \leq n \text{ integers}} 2+ \frac{4}{n-1+m+\frac{n}{m}}.
\end{equation}
Since then, efforts \cites{Wu21,CMW23} have been made to find the smallest exponent $p$ for which \eqref{eq:intro_Lp_maximal} holds. The previous best known result \cite{CMW23} was
\begin{equation}
    p \geq p_{cmw} : = 2+ \frac{4}{n+1+\frac{1}{2} + ... +\frac{1}{n}}.
\end{equation}
Table \ref{tab:comparison} below shows a list of values $p_{cmw}$ and $p_{ness}$ for $3\leq n \leq 10$.

To introduce the terminology for the range of the Lebesgue exponents $p$ we obtain, we define the following important exponents, which will also be used in the proof of the main theorem below. 

Let $m$ be an intermediate dimension satisfying $3\leq m \leq n+1$. Let
\begin{equation}\label{eq:pwm}
    p_{w,m}:= \left (\frac{1}{2} -\frac{1}{m(m-1)(m-2)}\right )^{-1}
\end{equation}
and
\begin{equation}
     \gamma_{w,m} := \frac{m-1}{2m} - \frac{n+m-1}{2m(m-1)(m-2)}>0.
\end{equation}
These exponents $p_{w,m}$ and $\gamma_{w,m}$ come from the weighted $L^p$ estimate developed by Wang, Zhang and the authors in \cite{DLWZ}. See Theorem \ref{thm:LWRE} below for the precise statement.

Let $M \leq n$ to be determined. 
Let
\begin{equation}\label{eq:def_p_m,alpha_m}
    p_m := \frac{2m}{m-1}, \quad \alpha_m := \frac{m^2-3m}{m^2-2m-2}, \quad 3\leq m \leq M.
\end{equation}
Here $\alpha_m$ is chosen such that $ \frac{\alpha_m}{p_{m-1}} +  \frac{1-\alpha_m}{p_{w,m}} = \frac{1}{p_m}$.

Next, let 
\begin{equation} \label{eq:gamma_m}
    \gamma_m := 
\begin{cases}
    \frac{2-n}{12} \quad &\text{ if } m=3; \\
    \alpha_m \gamma_{m-1} + (1-\alpha_m) \gamma_{w,m} \quad &\text{ if } 4\leq m \leq M; \\
    0 \quad &\text{ if } M+1 \leq m \leq n+1,
\end{cases}
\end{equation}
and choose $M$ to be the largest integer so that $\gamma_m$ is negative for all integers $m \in [3,M]$.

Finally, choose
\begin{equation}\label{eq:alpha_M+1}
    \alpha_{M+1}  := \frac{\gamma_{w,M+1}}{\gamma_{w,M+1}-\gamma_M} \in (0,1)
\end{equation}
so that $0=\gamma_{M+1} = \alpha_{M+1}\gamma_{M} + (1-\alpha_{M+1}) \gamma_{w,M+1}$, and define
\begin{equation} \label{eq:p_n+1}
    p_{n+1} = p_{n} = ... = p_{M+1} =  \left (\frac{\alpha_{M+1}}{p_{M}} +  \frac{1-\alpha_{M+1}}{p_{w,M+1}}\right)^{-1}.
\end{equation}

\begin{thm}\label{thm:main}
    Let $n\geq 3$, and $p_{n+1}$ be given as above. For $p \geq p_{n+1}$, the $L^p$ estimate on the maximal Schr\"odinger operator
    \begin{equation}\label{eq:intro_Lp_maximal'}
    \left\|\sup_{0<t\leq R}|\e f|\right\|_{L^p(B^{n}(0,R))} \lessapprox \|f\|_{2}
\end{equation}
holds for any $f\in L^2$ Fourier supported on $B^n(0,1)$.  
\end{thm}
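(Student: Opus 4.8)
The plan is to linearize the maximal operator, reduce \eqref{eq:intro_Lp_maximal'} to its discretized form at scale $R$, and then carry out a polynomial-partitioning induction whose bookkeeping is precisely the data $p_m,\alpha_m,\gamma_m$. Writing $\sup_{0<t\le R}|\e f(x)|=|e^{it(x)\Delta}f(x)|$ for a measurable $t\colon B^n(0,R)\to(0,R]$ (which, by a routine limiting argument, may be taken to assume finitely many values and to be locally constant at unit scale), the map $x\mapsto e^{it(x)\Delta}f(x)$ becomes a \emph{linear} operator in $f$, and passing to the graph $\Gamma=\{(x,t(x))\}\subset\R^{n+1}$ turns the problem into a restricted extension estimate for the paraboloid. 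After the standard wave-packet decomposition at scale $R^{1/2}$ and the usual pigeonholing, I would set up an induction on an integer $m$ with $3\le m\le n+1$: if the significant wave packets of $f$ are, at scale $R^{1/2}$, tangent to a fixed algebraic variety of dimension $m$ and bounded degree, then the linearized operator satisfies the $L^{p_m}$ estimate with an $R^{\gamma_m+\eps}$ loss. The case $m=n+1$ imposes no constraint and, since $\gamma_{n+1}=0$, is exactly \eqref{eq:intro_Lp_maximal'}.

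Since $p_{w,3}=3=p_3$ and $\gamma_{w,3}=\tfrac{2-n}{12}=\gamma_3$ (and correspondingly $\alpha_3=0$), the base case $m=3$ is nothing other than the weighted restriction estimate of Theorem \ref{thm:LWRE} with parameter $m=3$, so no interpolation is needed there.

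For the inductive step, fix $4\le m\le n+1$ and assume the bound at $p_{m-1}$ with loss $\gamma_{m-1}$. I would apply polynomial partitioning, in the form adapted to the Schr\"odinger maximal operator and run with an induction on the scale $R$, using a large degree $D$; this splits the relevant part of $\Gamma$ into $\sim D^{n}$ cells together with a wall, the $R^{1/2}$-neighbourhood of a variety of one lower dimension. In the cellular case each cell is met by only an $O(D^{1-n})$ fraction of the wave packets, so iterating the partitioning together with a broad--narrow decomposition closes the cellular contribution: the genuinely broad (transverse) piece is controlled by the bilinear refined Strichartz estimate, the narrow and lower-dimensional pieces by the hypotheses at smaller values of $m$, and the accumulated powers of $D$ are absorbed into $R^{\eps}$ after $O(\log R/\log D)$ steps. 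In the wall case the tangential wave packets concentrate in the $R^{1/2}$-neighbourhood of an $(m-1)$-dimensional variety, and this single tangential term is bounded in two complementary ways: by the induction hypothesis, giving the $L^{p_{m-1}}$ bound with loss $R^{\gamma_{m-1}+\eps}$, and by Theorem \ref{thm:LWRE} with parameter $m$, giving the weighted $L^{p_{w,m}}$ bound with gain $R^{\gamma_{w,m}+\eps}$. Interpolating these two bounds for the (linear) tangential operator at the exponent $p_m$, which was chosen precisely so that $\tfrac1{p_m}=\tfrac{\alpha_m}{p_{m-1}}+\tfrac{1-\alpha_m}{p_{w,m}}$, yields the loss $R^{\alpha_m\gamma_{m-1}+(1-\alpha_m)\gamma_{w,m}+\eps}=R^{\gamma_m+\eps}$. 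Running this recursion from $m=3$ up through $m=M$ keeps $\gamma_m<0$; the choice \eqref{eq:alpha_M+1} of $\alpha_{M+1}$ then forces $\gamma_{M+1}=0$, which gives the $L^{p_{M+1}}$ estimate with only an $R^{\eps}$ loss, and since $p_{M+1}=\cdots=p_{n+1}$ the remaining steps $m=M+2,\dots,n+1$ merely propagate the same exponent with $\gamma_m=0$, establishing \eqref{eq:intro_Lp_maximal'} at $p=p_{n+1}$. Finally, for $p>p_{n+1}$ one interpolates this with the trivial bound $\bigl\|\sup_{0<t\le R}|\e f|\bigr\|_{L^\infty(B^n(0,R))}\le\|\widehat f\|_{1}\lesssim\|f\|_2$ by H\"older on the ball $B^n(0,R)$.

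The crux of the argument is the wall/tangential step: one must show that a wave-packet family tangent at scale $R^{1/2}$ to an $(m-1)$-dimensional variety genuinely reduces, after the appropriate affine rescaling, to an $(m-1)$-dimensional Schr\"odinger maximal problem to which the induction hypothesis applies, \emph{and} that it simultaneously meets the hypotheses of the weighted restriction estimate of \cite{DLWZ} with parameter $m$, all while keeping uniform control of the rescalings, of the linearizing function $t(x)$, of the transversality of the broad pieces, and of the $\RapDec(R)$ error terms across the $O(\log R/\log D)$ layers of the induction. The remaining ingredient is the purely algebraic check that, with $p_{w,m}$ and $\gamma_{w,m}$ as in \eqref{eq:pwm} and the lines that follow, the recursion \eqref{eq:gamma_m} indeed terminates with $\gamma_{n+1}=0$ at the exponent $p_{n+1}$ of \eqref{eq:p_n+1}.
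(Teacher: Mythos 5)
Your proposal captures the high-level architecture — an induction on the dimension $m$ of the ambient algebraic variety, polynomial partitioning, and an interpolation in the tangential case with the weighted restriction estimate of \cite{DLWZ} — and your exponent bookkeeping at the interpolation step is correct. However, there are two genuine gaps in how you organize the argument.

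First, the trichotomy inside a single polynomial-partitioning step is scrambled. In the paper, a step at level $m$ splits into three disjoint cases: \emph{cellular} (mass spread over $\sim D^m$ cells, closed by induction on $R$ within the same $m$, subject to $p_m\geq\frac{2m}{m-1}$), \emph{algebraic-transverse} (mass near an $(m-1)$-dimensional wall $Y$, but most wave packets cross $Y$ transversely; closed by induction on the scale $R\to\rho$ within the same $m$ after re-doing the wave-packet decomposition and translating $Z$, subject to $(n+1-m)(\tfrac12-\tfrac1{p_m})+2\gamma_m\geq0$), and \emph{algebraic-tangential} (most wave packets are tangent to $Y$; closed by interpolating the induction hypothesis at $m-1$ with Corollary \ref{cor:LWRE} at parameter $m$). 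Your description instead folds ``the genuinely broad (transverse) piece controlled by the bilinear refined Strichartz estimate, the narrow and lower-dimensional pieces by the hypotheses at smaller $m$'' into the \emph{cellular} case, and your ``wall case'' contains only the tangential sub-case. This drops the algebraic-transverse sub-case entirely (which carries its own nontrivial equidistribution estimate and constraint on $p_m,\gamma_m$), and it conflates the cellular/wall dichotomy of polynomial partitioning with the broad/narrow dichotomy of Lemma \ref{lem:BLptoLp}, which sits at a separate outer layer (passing from $BL^p$ to $L^p$). The bilinear refined Strichartz estimate appears only as the base case $m=2$ of the $BL^p$ induction, not inside the cellular argument. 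Relatedly, you write $\sim D^n$ cells and an $O(D^{1-n})$ fraction; the partition produces $\sim D^m$ cells when it is performed relative to the $m$-dimensional variety $Z$, and the pigeonholed cell has $L^2$ mass $\lesssim D^{1-m}\|f\|_2^2$ — this is precisely what produces the threshold $p_m\geq\frac{2m}{m-1}$, not $\frac{2n}{n-1}$.

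Second, the base case does not work as stated. You claim the base case of the induction at $m=3$ ``is nothing other than'' Theorem \ref{thm:LWRE} with parameter $m=3$. But that theorem (with $m=3$) concerns $f$ tangent to a transverse complete intersection $Z(P_1,\dots,P_{n-1})$, which is $2$-dimensional, whereas the inductive statement at level $m=3$ assumes tangency to a $3$-dimensional variety. The hypotheses do not match, so Theorem \ref{thm:LWRE} cannot be quoted directly as the $m=3$ case: one still has to run polynomial partitioning at $m=3$ and apply the weighted estimate only after the tangential reduction to the $2$-dimensional wall $Y$. The paper's actual base of the induction on $m$ is $m=2$ (a $2$-dimensional $Z$), taken from Corollary \ref{cor:BWRE}. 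It is true that Corollary \ref{cor:LWRE} with parameter $m=3$ gives the same numerology as Corollary \ref{cor:BWRE} with $m=2$ — $p_{w,3}=3=p_2$ and $\gamma_{w,3}=\frac{2-n}{12}=\gamma_2$ — so the weighted estimate could in principle replace the bilinear estimate as the starting input; but the place to invoke it is at $m=2$ of the induction, not by declaring $m=3$ the base case. Finally, a stylistic but not harmless point: the paper does not linearize via a graph $\Gamma=\{(x,t(x))\}$, but formulates everything for arbitrary $n$-dimensional measures $\mu$, because that class is stable under the parabolic rescalings that occur in the induction; a fixed graph is not, so the ``uniform control of the linearizing function $t(x)$'' you flag is not a detail to keep track of but the reason the paper works with $\mu_R$ instead.
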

Here we write $A\lessapprox B$ if $A\leq C_\epsilon R^\epsilon B$ for any $\epsilon>0$ and any $R\geq 1$. Table \ref{tab:comparison} below shows a list of values $p_{n+1}$ for $3\leq n \leq 10$.

\def\arraystretch{1.7}
\begin{table}[h!]
\begin{tabular}{|m {1.5cm} |m {4cm}|m {4cm}|m {4cm}|}
\hline
$n$ & $p_{n+1}$                             & $p_{cmw}$  & $p_{ness}$ \\ \hline
3   & $2+\frac{26}{35} \approx 2.7429$      & $2+\frac{24}{29} \approx2.8276$   & $2+\frac{8}{11} \approx2.7273$ \\ \hline
4   & $2+\frac{146}{245} \approx 2.5959$    & $2+\frac{48}{73} \approx2.6575$   & $2+\frac{4}{7} \approx2.5714$  \\ \hline
5   & $2+\frac{730}{1459} \approx 2.5003$   & $2+\frac{240}{437} \approx2.5492$ & $2+\frac{8}{17} \approx2.4706$ \\ \hline
6   & $2+\frac{42118}{97051} \approx 2.434$ & $2+\frac{80}{169} \approx2.4734$  & $2+\frac{2}{5} =2.4$     \\ \hline
7  & $2+\frac{239324}{623753} \approx 2.3837$      & $2+\frac{560}{1343} \approx2.417$     & $2+\frac{6}{17} \approx2.3529$ \\ \hline
8  & $2+\frac{119662}{347269} \approx 2.3446$      & $2+\frac{1120}{3001} \approx2.3732$   & $2+\frac{6}{19} \approx2.3158$ \\ \hline
9  & $2+\frac{39451162}{125976997} \approx 2.3132$ & $2+\frac{10080}{29809} \approx2.3382$ & $2+\frac{2}{7} \approx2.2857$  \\ \hline
10 & $2+\frac{19725581}{68686691} \approx 2.2872$  & $2+\frac{10080}{32581} \approx2.3094$ & $2+\frac{6}{23} \approx2.2609$ \\ \hline
\end{tabular}
\caption{Comparison on the Lebesgue exponents among our result $p_{n+1}$ , the previous best known result $p_{cmw}$ and the best known necessary condition $p_{ness}$ for $3\leq n \leq 10$.}
\label{tab:comparison}
\end{table}

The asymtoptics of $p_{n+1}$ and $p_{cmw}$ are both
$$
p_{n+1} , p_{cmw} = 2+\frac{4}{n} + \frac{4 \log n}{n^2} + O(n^{-2}),
$$
while that of $p_{ness}$ is
$$
p_{ness} = 2 + \frac{4}{n}- \frac{8}{n^{3/2}} + O(n^{-2}).
$$
Although $p_{n+1} < p_{cmw}$ for all $n \geq 3$, the improvement is insignificant when $n$ is large. This is because both methods rely on bilinear estimates. See also the last paragraph of Section \ref{sec:Sketch_of_pf} for more discussion.

\subsection{A general version}

Consider the Fourier extension operator $E$ on the truncated paraboloid defined on $L^2$ functions $f$:
\begin{equation}
    Ef(x',x_{n+1}) : = \int_{B^n(0,1)} e^{i (x' \cdot \xi + x_{n+1}|\xi|^2)} f(\xi) d \xi, \quad (x',x_{n+1}) \in \R^{n+1}.
\end{equation}

The techniques in this paper naturally extend to the Fourier extension operator on hypersurfaces in $\R^{n+1}$ with positive definite second fundamental form. For simplicity and the application to the Schr\"odinger estimate, we will focus on the paraboloid.

We say that $\mu$ is an $n$-dimensional measure in $\R^{n+1}$ if it is a probability measure supported on $B^{n+1}(0,1)$ and satisfies that
$$
\mu(B^{n+1}(x,r)) \leq C_{\mu}r^{n}, \quad \forall r>0, \quad \forall x \in \R^{n+1}.
$$
Denote 
\begin{equation}\label{eq:mu_R}
    d\mu_R(\cdot): = R^{n}d\mu(\frac{\cdot}{R}).
\end{equation}

\begin{thm}\label{thm:main_ext}
    Let $n \geq 3$, and $p_{n+1}$ be given as in equations \eqref{eq:pwm} to \eqref{eq:p_n+1}. Let $\mu$ be an $n$-dimensional measure in $\R^{n+1}$ and let $\mu_R$ be defined as in $\eqref{eq:mu_R}$. For $p \geq p_{n+1}$, the $L^p$ weighted restriction estimate
    \begin{equation}\label{eq:thm:main_ext}
        \|Ef\|_{L^p(d \mu_R)} \lessapprox \|f\|_2
    \end{equation}
    holds for any $f\in L^2(B^n(0,1))$.
\end{thm}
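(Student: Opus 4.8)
The plan is to deduce Theorem~\ref{thm:main_ext} from a finite family of weighted extension inequalities indexed by the auxiliary dimension $m$, built up one level at a time by an induction on the scale $R$ that combines polynomial partitioning, a broad--narrow decomposition, the bilinear refined Strichartz estimate, and the weighted restriction estimate of \cite{DLWZ} (Theorem~\ref{thm:LWRE}). For $3\le m\le M+1$ let $P(m)$ denote the statement that
$$
\|Ef\|_{L^{p_m}(d\mu_R)}\lessapprox R^{\gamma_m}\|f\|_2
$$
holds for every $n$-dimensional measure $\mu$ on $B^{n+1}(0,1)$ and every $f\in L^2(B^n(0,1))$, with $p_m,\gamma_m$ as in \eqref{eq:def_p_m,alpha_m}--\eqref{eq:p_n+1}. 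Since $\gamma_{M+1}=0$ and $p_{n+1}=p_{M+1}=\dots=p_n$, the statement $P(M+1)$ is exactly \eqref{eq:thm:main_ext} at the endpoint exponent $p=p_{n+1}$, the $R^\eps$ implicit in $\lessapprox$ being harmless when $\gamma=0$; the range $p>p_{n+1}$ then follows by Hölder's inequality in the probability measure $d\mu_R$ together with the trivial bound $\|Ef\|_{L^\infty}\lesssim\|f\|_2$. So it suffices to establish $P(3),P(4),\dots,P(M+1)$ in turn.

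\textbf{Base case.} Because $p_3=p_{w,3}=3$, $\gamma_3=\gamma_{w,3}=(2-n)/12<0$ for $n\ge3$, and the interpolation parameter $\alpha_3$ in \eqref{eq:def_p_m,alpha_m} equals $0$, the assertion $P(3)$ is precisely the $m=3$ instance of Theorem~\ref{thm:LWRE}. Concretely it is a weighted bilinear extension estimate: a broad--narrow reduction at scale $K$ passes to $\|Ef_\tau\,Ef_{\tau'}\|_{L^{3/2}(d\mu_R)}$ over $K^{-1}$-separated caps $\tau,\tau'$, Tao's bilinear extension theorem applies (the exponent $3/2\ge(n+3)/(n+1)$ exactly when $n\ge 3$), and the fractal/weighted form of that step --- the bilinear refined Strichartz estimate against the $n$-dimensional weight $d\mu_R$ --- contributes the factor $R^{(2-n)/12}$.

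\textbf{Inductive step.} Fix $4\le m\le M+1$, assume $P(m')$ for all $m'<m$, and prove $P(m)$ by an induction on $R$, so that $P(m)$ at scales $\le R/2$ may be assumed as well. Run the polynomial partitioning of Du and Zhang \cite{DZ19}: a broad--narrow split of $B^{n+1}(0,R)$ at a scale $K=K(\eps)$, the narrow part (dominated by one $K^{-1}$-cap) reduced to scale $R/K^2$ by parabolic rescaling, and the broad part cut by the zero set $Z$ of a polynomial of controlled degree into the cells of $\R^{n+1}\setminus Z$ and the $R^{1/2}$-neighbourhood of $Z$. The cellular contribution is handled by $P(m)$ at scale $\le R/2$, using that each cell meets only a controlled fraction of the wave packets and of $d\mu_R$. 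Inside the neighbourhood of $Z$ one separates the packets transverse to $Z$ from those tangent to $Z$: the transverse packets are controlled by transversal equidistribution and $L^2$-orthogonality as in \cite{DGL17,DZ19}, with the bilinear refined Strichartz estimate supplying the finest scale, yielding a bound no worse than $R^{\gamma_m}\|f\|_2$; the tangent packets concentrate near an algebraic variety of dimension $\le m-1$, and --- after iterating polynomial partitioning inside that neighbourhood to reduce the ambient dimension by one --- one may apply both the already established $P(m-1)$, at exponent $p_{m-1}$ with decay $\gamma_{m-1}$, and the weighted restriction estimate Theorem~\ref{thm:LWRE}, which governs precisely such $m$-dimensionally concentrated configurations at exponent $p_{w,m}$ with decay $\gamma_{w,m}$. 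Interpolating these two bounds with weights $\alpha_m$ and $1-\alpha_m$ --- the identity $\frac{\alpha_m}{p_{m-1}}+\frac{1-\alpha_m}{p_{w,m}}=\frac1{p_m}$ in \eqref{eq:def_p_m,alpha_m} being arranged so the result sits at $L^{p_m}$ --- bounds the tangential contribution by $R^{\gamma_m}\|f\|_2$ with $\gamma_m=\alpha_m\gamma_{m-1}+(1-\alpha_m)\gamma_{w,m}$. Summing the cellular, transverse and tangential estimates over the finitely many rounds of the partitioning, the accumulated $K^{O(1)}$ and $R^{O(\eps)}$ errors are absorbed into $\lessapprox$, and $P(m)$ follows. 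Here $M$ is chosen as the last $m$ for which the procedure keeps $\gamma_m<0$, and $\alpha_{M+1}$ in \eqref{eq:alpha_M+1} is then taken so the terminal interpolation lands exactly on $\gamma_{M+1}=0$, which gives the smallest exponent $p_{n+1}=p_{M+1}$ still carrying an $R^0$ bound; pushing $m$ further would make $\gamma_m$ positive and the bound useless. This establishes $P(M+1)$, i.e.\ Theorem~\ref{thm:main_ext}. (Theorem~\ref{thm:main} then follows by taking $\mu$ to be the normalised pushforward of Lebesgue measure on $B^n(0,1)$ under the graph map $x\mapsto(x,t(x))$ of a measurable linearising function $t$, which is $n$-dimensional, together with Littlewood--Paley decomposition and parabolic rescaling.)

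\textbf{Main obstacle.} The crux is the tangential algebraic case: organising the transverse-versus-tangent dichotomy relative to a bounded-degree variety, controlling through a nested polynomial-Wolff / lower-dimensional Kakeya mechanism how the $n$-dimensional weight $d\mu_R$ can pile up along lower-dimensional algebraic sets, and reducing this cleanly to the $m$-dimensionally concentrated model on which $P(m-1)$ and Theorem~\ref{thm:LWRE} both apply --- all with bookkeeping tight enough that the errors produced at each of the finitely many rounds and levels are swallowed by the $R^\eps$ slack and the strictly negative $\gamma_m$ (for $m\le M$). A secondary but essential point is establishing the bilinear refined Strichartz estimate in the precise weighted form needed for the base case $P(3)$ and for the finest-scale estimates inside the inductive step, since this underpins the entire scheme.
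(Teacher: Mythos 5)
Your overall roadmap -- an induction on an auxiliary dimension $m$, with polynomial partitioning at each level, the cellular/algebraic dichotomy, and interpolation in the tangential sub-case between the $(m-1)$-level estimate and the weighted restriction estimate of Theorem~\ref{thm:LWRE} -- matches the paper's strategy. But your formulation of the inductive statement $P(m)$ is wrong in a way that makes both the base case and the inductive step unsupportable, so the argument as written does not close.

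The statement you call $P(m)$ -- that $\|Ef\|_{L^{p_m}(d\mu_R)}\lessapprox R^{\gamma_m}\|f\|_2$ for \emph{every} $f\in L^2(B^n(0,1))$ -- is simply false whenever $\gamma_m<0$, i.e.\ for all $3\le m\le M$. Take $f\equiv 1$ on $B^n(0,1)$ and $\mu$ a normalised piece of $n$-dimensional Lebesgue measure on a hyperplane through the origin: then $|Ef|\gtrsim 1$ on a unit ball around the origin and $\mu_R$ of that ball is $\sim 1$, so $\|Ef\|_{L^p(d\mu_R)}\gtrsim 1\sim\|f\|_2$, contradicting a bound $R^{\gamma_m}\|f\|_2$ with $\gamma_m<0$. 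The paper avoids this by building the induction around a different quantity: Proposition~\ref{prop:induction} is a \emph{weighted broad norm} ($BL^p_A$) bound for $f$ that is \emph{concentrated in wave packets tangent to an $m$-dimensional transverse complete intersection}. It is exactly this tangency hypothesis that makes a strictly negative power of $R$ possible; dropping it (as you do) removes the gain entirely. The broad norm, in turn, is what lets the narrow contribution be peeled off by parabolic rescaling (Lemma~\ref{lem:BLptoLp}) rather than having to carry the full $L^p$ norm through the partitioning.

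This mis-formulation cascades into the rest of the proposal. Your base case claim -- that "$P(3)$ is precisely the $m=3$ instance of Theorem~\ref{thm:LWRE}" -- is incorrect: Theorem~\ref{thm:LWRE} carries the tangency hypothesis (concentration near an $(m-1)$-dimensional variety), whereas your $P(3)$ does not; and the paper's actual base case is the $m=2$ instance of Proposition~\ref{prop:induction}, supplied by the \emph{bilinear} estimate in broad-norm form (Corollary~\ref{cor:BWRE}), not by Theorem~\ref{thm:LWRE}. In the inductive step, the tangential sub-case passes to wave packets tangent to an $(m-1)$-dimensional variety and then uses precisely the tangency-restricted hypothesis at level $m-1$; your unconstrained $P(m-1)$ both fails to hold and, even if it held, could not be interpolated against Corollary~\ref{cor:LWRE} at exponent $p_{w,m}$ because the two estimates would be about different quantities. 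Also, your transverse sub-case invokes "the bilinear refined Strichartz estimate supplying the finest scale," but in the paper the transverse wave packets are controlled by re-running the induction at the same $m$ and a smaller scale $\rho$ together with transverse equidistribution and $L^2$-orthogonality; the bilinear estimate enters only as the $m=2$ base. Finally, the induction must be carried all the way to $m=n+1$ (where the tangency hypothesis becomes vacuous and $\gamma_{n+1}=0$), and only then does Lemma~\ref{lem:BLptoLp} convert the broad-norm bound into Theorem~\ref{thm:main_ext}; stopping at $m=M+1$, as you do, leaves a broad-norm estimate that still presupposes tangency to a proper subvariety.
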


Note that $E(\hat{f})(x',t) = (2\pi)^n \e f(x')$ if $f$ has Fourier support on the unit ball. By the locally constant property of $E$, the estimate \eqref{eq:intro_Lp_maximal'} is a special case of the weighted restriction estimate \eqref{eq:thm:main_ext} (see \cite{DZ19} for instance). Our goal is to prove Theorem \ref{thm:main_ext}.

\subsection{Sketch of the proof of Theorem \ref{thm:main_ext}.}\label{sec:Sketch_of_pf}
The proof of Theorem \ref{thm:main_ext} relies on the method of polynomial partitioning developed by Guth \cites{Gu16, Gu18}, the bilinear refined Strichartz estimate obtained by Guth, Li, and the first author \cite{DGL17}, and the weighted $L^p$ estimates from Wang, Zhang, and the authors \cite{DLWZ}.

Polynomial partitioning has been extensively used to obtain restriction estimates for the paraboloid \cites{Gu16,Gu18,HR19,DGOWWZ21, WangRestr, WangWuRestr, GWZRestr}, the cone \cite{OW22}, and H\"omander-type oscillatory integral operators \cite{GHI19}. The almost sharp Schr\"odinger maximal estimate for $n=2$ in \cite{DGL17} and the partial progress \cite{CMW23} in higher dimensions also rely on the method of polynomial partitioning.

Before setting up the induction, we will introduce the weighted broad norm $BL^p(d\mu_R)$ following \cite{DGOWWZ21}. The quantity $\|Ef\|_{BL^p(d\mu_R)}$ measures the contribution to $\|Ef\|_{L^p(d\mu_R)}$ from the \textit{broad} part, which is large when there are portions of $f$ with separated Fourier supports that contribute significantly. The precise definition of $BL^p(d\mu_R)$ is given in Subsection \ref{sec:weightedBL}. The leftover \textit{narrow} part can be handled relatively easily by parabolic rescaling and induction on scales.

To estimate the broad part, we shall induct on the dimension of the algebraic variety near which $Ef$ is concentrated. We will prove that for each intermediate dimension $m \in [3, M]$ and $p_m = \frac{2m}{m-1}$, we have
\begin{equation}\label{eq:intro_induct}
\left\|Ef\right\|_{BL^{p_m}(d\mu_R)} \lessapprox R^{\gamma_m} \|f\|_2
\end{equation}
whenever most of the wave packets of $f$ are tangent to an $m$-dimensional algebraic variety $Z_m$. Here, the exponent $\gamma_m$ defined as in \eqref{eq:gamma_m} is non-positive. Readers should refer to Subsections \ref{sec:WPDec} and \ref{sec:tang_wavepackets} for the detailed discussion on wave packet decomposition and the precise definition of a wavepacket being tangent to an algebraic variety.

The base case of the induction, where $m=3$, follows from polynomial partitioning and a weaker version of the bilinear refined Strichartz estimate developed in \cite{DGL17}. See Theorem \ref{thm:BWRE} and Corollary \ref{cor:BWRE} below. By induction, we may assume that the $(m-1)$-dimensional estimate is true. Polynomial partitioning comes in to help reduce the dimension of the algebraic variety near which the wave packets of $f$ are concentrated.

Following Guth \cite{Gu18}, $\mathbb{R}^{n+1}$ can be divided into $\sim D^{m}$ pieces by the zero set $Z(P)$ of a polynomial $P$ of degree at most $D$, such that the weighted broad norm of $Ef$ is simultaneously divided into roughly equal pieces. One of the following three scenarios occurs:
\begin{itemize}
    \item \textit{Cellular/non-algebraic case}: the wave packets of $f$ are concentrated away from a neighborhood of the zero set $Z(P)$. The argument here is the same as \cite{Gu18}*{Subsection 8.1}. The induction closes as long as 
    \begin{equation}\label{ineq:cell_rest}
        p_m \geq \frac{2m}{m-1}.
    \end{equation}
    \item \textit{Algebraic case}: the wave packets of $f$ are concentrated in a neighborhood of an algebraic variety $Y$ of dimension $m-1$.
    \begin{itemize}
        \item \textit{Transverse sub-case}: most of the wave packets of $f$ pass through $Y$ transversely. The argument here is the same as \cite{Gu18}*{Subsection 8.4}. The induction closes as long as
        \begin{equation}\label{ineq:transverse_rest}
            \frac12(n+1-m)\left (\frac{p_m}{2}-1\right)+p_m\gamma_m \geq 0.
        \end{equation}
        Direct computation shows that inequality \eqref{ineq:transverse_rest} is satisfied. In fact, we have the freedom to choose the pair of exponents satisfying \eqref{ineq:cell_rest} and \eqref{ineq:transverse_rest}. It turns out that choosing the exponent $p_m = \frac{2m}{m-1}$ minimizes the exponent $p$ in the $L^p$ estimate \ref{eq:intro_Lp_maximal'} we are interested in. See Subsection \ref{sec:optimal} for details.
        \item \textit{Tangential sub-case}: most of the wave packets of $f$ are tangent to $Y$. In this case, we shall apply the induction hypothesis. Note that $p_{m-1} = \frac{2(m-1)}{m-2} > \frac{2m}{m-1} = p_{m}$. We obtain the desired estimate by interpolating the induction hypothesis with the following weighted $L^p$ estimate
        $$
        \|Ef\|_{L^{p_{w,m}}(B^{n+1}(0,R),d\mu)} \lessapprox R^{\gamma_{w,m}}\|f\|_{2},
        $$
        whenever most of the wave packets of $f$ are tangent to an $(m-1)$-dimensional algebraic variety. See Theorem \ref{thm:LWRE}.
    \end{itemize}
\end{itemize}

We have obtained \eqref{eq:intro_induct} for $m \in [3,M]$. The reason for not continuing the same process is that the exponent $\gamma_m$ resulting from the interpolation will always be positive if $\gamma_{m-1}$ is non-negative. There is no way to recover the loss of a positive power of $R$. Therefore, we terminate the process at $m=M$, where $\gamma_M$ is the last negative exponent. We then follow the polynomial partitioning, with $\gamma_{M+1} = \gamma_{M+2} = ... = \gamma_{n+1} = 0$ and $p_{M+1} = p_{M+2} = ... =p_{n+1}$, where $p_{n+1}$ is defined as in \eqref{eq:p_n+1}. The definition of $p_{n+1}$ comes from the interpolation in the tangential sub-case of the polynomial partitioning at $m=M+1$. Therefore, we conclude Theorem \ref{thm:main_ext}.

We remark that the strategy described above can be adapted to prove maximal Schr\"odinger estimates of the form
\begin{equation}
    \left\|\sup_{0<t\leq R}|Ef|\right\|_{L^p(B^{n}(0,R))} \lessapprox  R^{\gamma_0} \|f\|_{2}
\end{equation}
for some fixed $\gamma_0>0$. The smallest $p\geq\frac{2(n+1)}{n}$ we can obtain using the same ingredients can be computed by modifying the definition of $M$. In this general case, we want $\gamma_m < \gamma_0$ for all integers $m \in [3,M]$.

A slight modification of the strategy above also yields weighted $k$-broad estimates, following Guth \cite{Gu18}. However, passing from weighted $k$-broad estimates to weighted linear estimates remains a challenging problem, with no prior work addressing this issue. The authors will pursue this direction in a subsequent paper.

\subsection{Organization of the paper}In Section \ref{sec:prelim}, we will discuss several preliminaries, including wave packet decomposition, the definition of wave packets being tangential to an algebraic variety, the broad norm $BL^p$ and its relation to $L^p$ estimates. In Section \ref{sec:WRE}, we will introduce the two main ingredients for our proof: linear and bilinear weighted $L^p$ restriction estimates. In Section \ref{sec:PolyPart}, using polynomial partitioning we prove a main inductive proposition that implies Theorem \ref{thm:main_ext}.

\subsection{List of notation.}  
Throughout the article, we write $A\lesssim B$ if $A\leq CB$ for some absolute constant $C$; $A\sim B$ if $A\lesssim B$ and $B\lesssim A$; $A\lesssim_\eps B$ if $A\leq C_\eps B$; $A\lessapprox B$ if $A\leq C_\eps R^\eps B$ for any $\eps>0, R>1$.

For a large parameter $R$, ${\rm RapDec}(R)$ denotes those quantities that are bounded by a huge (absolute) negative power of $R$, i.e. ${\rm RapDec}(R) \leq C_N R^{-N}$ for arbitrarily large $N>0$. Such quantities are negligible in our argument.

For each $\varepsilon>0$, we will choose a sequence of small parameters
$$
\delta_{\text{deg}} \ll \delta \ll \delta_{n} \ll .. \ll \delta_1\ll \delta_0 \ll \varepsilon.
$$

For polynomials $P_1,...,P_{n+1-m}$, the algebraic variety $Z=Z(P_1,...,P_{n+1-m})$ is defined to be the set of common zeros of $P_1,...,P_{n+1-m}$. Let $D_Z$ denote an upper bound of the degrees of $P_1,...,P_{n+1-m}$. Here $D_Z \leq R^{\delta_{\text{deg}}}$ unless otherwise specified.

\begin{acknowledgement}
XD is supported by NSF CAREER DMS-2237349 and Sloan Research Fellowship. JL is supported by AMS-Simons Travel Grant.
\end{acknowledgement}

\section{Preliminaries}\label{sec:prelim}

\subsection{Wave packet decomposition}\label{sec:WPDec}
The setup is similar to \cite{Gu18}. For any $L^2$ function $f$ on $B^n(0,1)$, we decompose $f$ into pieces $f_{\tn}$, each of which is essentially localized in both physical space and frequency space. More precisely, cover $B^n(0,1)$ by finitely overlapping balls $\theta$ of radius $R^{-1/2}$ and cover $\mathbb{R}^{n}$ by finitely overlapping balls of radius $R^{1/2 +\delta}$, centered at $\nu \in R^{1/2 +\delta}\mathbb Z^{n}$, where $\delta$ is a small positive quantity that may depend on $\varepsilon$. By partition of unity, we have a decomposition
$$
f = \sum_{(\tn) \in \mathbb T} f_{\tn} + \text{RapDec}(R)\|f\|_{2},
$$
where $f_{\tn}$ is supported in $\theta$, and up to a rapidly decaying tail, has Fourier transform supported in a ball of radius $R^{1/2 +\delta}$ around $\nu$.

A key feature of the decomposition is the almost $L^2$ orthogonality among $f_{\tn}$: for each subset $\T ' \subset \T$, we have
$$
\left\|\sum_{(\tn) \in \T' } f_{\tn}\right\|_{L^2}^2 \sim \sum_{(\tn)\in \T'} \|f_{\tn} \|_{2}^2.
$$
And for each pair $(\tn)$, $Ef_{\tn}$ restricted to $B(0,R)$ is roughly supported on $T_{\tn}$, where
$$
T_{\theta ,\nu} := \{ (x',x_{n+1}) \in B^{n+1}(0,R) : |x' + 2 x_{n+1} c(\theta) - \nu| \leq R^{1/2 +\delta}\}
$$
is a tube of radius $R^{1/2 +\delta}$ centered at $\nu$, pointing to the direction 
$$G(\theta):= \frac{(-2c(\theta),1)}{|(-2(\theta),1)|},$$
of length $\sim R$. Here $c(\theta)$ is the center of $\theta$.

\subsection{Wave packets tangential to an algebraic variety}\label{sec:tang_wavepackets}
In the introduction, we discussed the concept of wave packets being \textit{tangent} to an algebraic variety, which we will make precise here. A variety $Z(P_1,...,P_{n+1-m})$ is called a \textit{transverse complete intersection} if 
$$
\nabla P_1 \wedge ... \wedge \nabla P_{n+1-m} \neq 0 \quad \text{on } Z(P_1,...,P_{n+1-m}).
$$

Let $Z$ be an algebraic variety and $N$ a positive parameter. For any $(\tn) \in \T$, the wavepacket $T_{\tn}$ is said to be $NR^{-1/2}$\textit{-tangent} to $Z$ if 

$$
T_{\tn} \subset N_{NR^{1/2}}Z, \quad \text{and} \quad \text{Angle}(G(\theta),T_{z}Z) \leq NR^{-1/2}
$$
for any non-singular point $z \in N_{2NR^{1/2}}(T_{\tn}) \cap (B^{n+1}(0,2R))\cap Z$. Let 
$$
\T_Z(N): = \{(\tn)\in \T \mid T_{\tn} \text{ is } NR^{-1/2}\text{-tangent to }Z\}.
$$
We say that $f$ is \textit{concentrated in wave packets} from $\T_Z(N)$ if 
$$
\sum_{(\tn) \not \in \T_Z(N)} \|f_{\tn}\|_{L^2} \leq \text{RapDec}(R) \|f\|_{2}. 
$$
It is worth noting that the smallest interesting value of $N$ is $R^{\delta}$ as the radius of $T_{\tn}$ is $R^{1/2+\delta}$.

\subsection{Weighted broad norm} \label{sec:weightedBL} Following \cite{DGOWWZ21}, we define the weighted broad norm in this subsection.

By locally constant property, see for instance \cite{DZ19}, the $L^p$ estimate on the maximal operator \eqref{eq:intro_Lp_maximal'} is a corollary of
$$
\|Ef\|_{L^p(d\mu_R)} \lessapprox \|f\|_2, \quad \forall f\in L^2(B^n(0,1)), \quad \forall n\text{-dimensional measure } \mu.
$$

To facilitate the main inductive argument, we introduce the broad norm that replaces $L^p(d\mu_R)$. To pass from the generally weaker broad norm estimate to the $L^p$ estimate, see Subsection \ref{sec:BLpvsLp} below.

Let $K\gg 1$ be a large constant. We decompose $B^n(0,1)$ into balls $\tau$ of radius $K^{-1}$ and $B^{n+1}(0,R)$ into balls $B_{K^2}$ of radius $K^2$. Consider the decomposition of $f$:
$$
f = \sum_\tau f_\tau, \quad \text{where } f_\tau= f \chi_\tau.
$$
Let $A\ll K$ be a large constant. For a ball $B \subseteq B^{n+1}(0,R)$, we define
\begin{equation}
    \|Ef\|_{BL_A^p(B;d\mu_R)}^p := \sum_{B_{K^2}\subset B} \mu_{Ef}(B_{K^2}),
\end{equation}
where
$$
\mu_{Ef}(B_{K^2}):=\min_{V_1,...,V_A: 1-\text{subspaces of }\R^{n+1}}\left ( \max_{\tau: \text{Angle}(G(\tau),V_a)>K^{-1},  \forall a} \int_{B_{K^2}}|Ef_\tau|^p d\mu_R\right).
$$

We often use the shorthanded version $\|Ef\|_{BL_A^p(d\mu_R)}$ to mean $\|Ef\|_{BL_A^p(B^{n+1}(0,R);d\mu_R)}$ for simplicity. Since we are using $1$-subspaces $V_1,..,V_A$ in the definition of $BL^p$, it is also known as $2$-broad norm.

The only purpose of $A$ in the definition of the broad norm is to make certain versions of triangle inequality and H\"older's inequality possible. Even so, $BL^p_A(d\mu_R)$ is still not a \textit{norm}. The exact value of $A$ is not important, and hence we often write $BL^p$ instead of $BL^p_A$.

The broad norm was first introduced by Guth in \cite{Gu18} to derive an unweighted restriction estimate. Following \cite{DGOWWZ21}, $BL^p$ introduced here is a weighted variant of the $2$-broad norm in \cite{Gu18}. We will see in Lemma \ref{lem:BLptoLp} and Corollary \ref{cor:BWRE} that the $2$-broad estimate, despite being a weaker version of the corresponding bilinear estimate Theorem \ref{thm:BWRE}, acts equally well when being converted into the desired linear estimate.

\subsection{$BL^p$ estimates and $L^p$ estimates}\label{sec:BLpvsLp}

Following \cite{DGOWWZ21}, the lemma below allows us to pass $BL^p$ estimates to $L^p$ estimates.

\begin{lem}[c.f. \cite{DGOWWZ21}*{Lemma 5.2}] \label{lem:BLptoLp}
Let $n\geq 2$ and $p \geq \frac{2(n+1)}{n}$. Let $\mu$ be an $n$-dimensional measure in $\mathbb R^{n+1}$. Assume that for all $\varepsilon>0$, there exists large constant $A = A(\varepsilon)$ such that
\begin{equation}\label{eq:lem:BLptoLp:assum}
    \|Ef\|_{BL^p_A(d\mu_R)} \lesssim_{K,\varepsilon} R^{\varepsilon} \|f\|_2
\end{equation}
holds for all $K,R >1$.

Then, for all $\varepsilon>0$, $R > 1$, there holds
\begin{equation}
    \|Ef\|_{L^p(d\mu_R)}\lessapprox \|f\|_{2}.
\end{equation}
    
\end{lem}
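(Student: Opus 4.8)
The plan is to run the standard "broad/narrow" dichotomy in physical space on the ball $B^{n+1}(0,R)$, iterated at many scales, exactly as in \cite{DGOWWZ21}*{Lemma 5.2}. Fix $f$ Fourier supported on $B^n(0,1)$ and let $K = R^{\delta_0}$ (a small power of $R$). Decompose $B^n(0,1)$ into caps $\tau$ of radius $K^{-1}$ and consider, on each ball $B_{K^2}$ of radius $K^2$ inside $B^{n+1}(0,R)$, the two alternatives: either $\int_{B_{K^2}}|Ef|^p\,d\mu_R$ is comparable to $\mu_{Ef}(B_{K^2})$ (the broad case, where no choice of $A$ one-dimensional subspaces $V_1,\dots,V_A$ can capture the significant caps), or it is dominated by $\sum_{\tau \in \text{few}} \int_{B_{K^2}} |Ef_\tau|^p\, d\mu_R$ coming from $O(A)$ clusters of caps concentrated near $A$ directions (the narrow case). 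Summing over $B_{K^2}\subset B^{n+1}(0,R)$, one gets
\begin{equation*}
\|Ef\|_{L^p(d\mu_R)}^p \lesssim \|Ef\|_{BL^p_A(d\mu_R)}^p + C_A \sum_{\tau} \|Ef_\tau\|_{L^p(\mathcal{N}; d\mu_R)}^p,
\end{equation*}
where $\mathcal{N}$ is the union of the narrow balls. On each narrow cap $\tau$ we have $f_\tau$ Fourier supported on a $K^{-1}$-ball, so by parabolic rescaling $Ef_\tau$ is converted into an extension operator at scale $R/K^2 \le R$ against a rescaled $n$-dimensional measure (with comparable constant $C_\mu$); this is where we will iterate.

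The key steps, in order: (1) prove the one-scale inequality above — this is pure combinatorics/pigeonholing on directions, plus the definition of $\mu_{Ef}$; (2) apply parabolic rescaling to each narrow piece, using that the class of $n$-dimensional measures is preserved (up to a bounded change in $C_\mu$) by the scaling $d\mu \mapsto R^n d\mu(\cdot/R)$ restricted to a $K^{-1}$-cap — this is the place the hypothesis $p\ge \frac{2(n+1)}{n}$ enters, because after rescaling one picks up a favorable power of $K$ exactly when $p$ is at least the Stein--Tomas-type exponent $\frac{2(n+1)}{n}$, so that the $C_A K^{(\dots)}$ factor is beaten by a gain $K^{-\eta}$ from rescaling; (3) iterate: feed the rescaled narrow pieces back into the same dichotomy, producing after $\sim \log R/\log K$ steps a sum of purely broad contributions at all intermediate scales $R_j = R/K^{2j}$, plus a $\RapDec(R)$ tail once the scale drops to $O(1)$; (4) apply the hypothesis \eqref{eq:lem:BLptoLp:assum} with the admissible constant $A = A(\varepsilon)$ at every scale, so each broad term contributes $\lesssim_{K,\varepsilon} R_j^{\varepsilon}\|f_{\tau_j}\|_2$; (5) sum the geometric-type series in $j$ using $L^2$-orthogonality of the $f_\tau$ (the almost-orthogonality recorded in Subsection \ref{sec:WPDec}) to recombine $\sum \|f_{\tau}\|_2^2 \sim \|f\|_2^2$, and absorb the accumulated constants $C_A^{\log R/\log K}$ and $K^{\varepsilon \log R/\log K}$ into $R^{O(\delta_0)} = R^{O(\varepsilon)}$ by choosing $\delta_0$ small relative to $\varepsilon$.

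The main obstacle — really the only delicate bookkeeping point — is step (5): controlling the blow-up of the implicit constants through the iteration. Each narrow step multiplies the constant by $C_A$, and the number of steps is $\sim \varepsilon^{-1}\log R / \log K$ if $K$ is a fixed power $R^{\delta_0}$; so one must choose the scales carefully (either $K$ a small power of $R$, giving $O(1/\delta_0)$ steps, or $K$ an absolute constant, giving $O(\log R)$ steps but then $C_A^{\log R} = R^{O(\log C_A)}$ which is too large) — the standard resolution is to take $K = R^{\delta_0}$ so the iteration terminates after $O(\delta_0^{-1})$ steps, making $C_A^{O(\delta_0^{-1})}$ a constant depending only on $\varepsilon$, and the accumulated $R^\varepsilon$-type losses stay $\lessapprox$. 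One must also check that the narrow-case error terms (the $\RapDec(R)$ tails from wave-packet decomposition at each scale, and the overlap in covering $B^{n+1}(0,R)$ by $B_{K^2}$'s) are genuinely negligible, which is routine. Everything else is bounded by citing \cite{DGOWWZ21} essentially verbatim, since the weighted setting there is identical to ours.
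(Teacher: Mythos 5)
Your overall strategy — the broad/narrow dichotomy on $B_{K^2}$ balls, parabolic rescaling of each narrow $K^{-1}$-cap, feeding the broad pieces into the hypothesis \eqref{eq:lem:BLptoLp:assum}, and recombining via $\ell^2$-orthogonality of the $f_\tau$ — matches the paper's sketch, which itself simply defers to \cite{DGOWWZ21}*{Lemma 5.2}. The genuine gap is in your step (5), the choice $K = R^{\delta_0}$. The hypothesis only provides $\|Ef\|_{BL^p_A(d\mu_R)} \lesssim_{K,\varepsilon} R^{\varepsilon}\|f\|_2$ with an implicit constant that is allowed to depend on $K$ in an arbitrary, unquantified way; so if $K$ is taken to be a power of $R$, the bound coming from the hypothesis is $C(R^{\delta_0},\varepsilon)\,R^\varepsilon\|f\|_2$, which could be useless. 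Nothing in the lemma statement (or in Proposition \ref{prop:induction}, which it will be applied to) pins down the growth of $C(K,\varepsilon)$ in $K$, so this route is not available.

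Worse, you dismiss the fixed-$K$ alternative on the grounds that ``$C_A^{\log R}$ is too large,'' but this overlooks exactly the gain you yourself identify earlier in the same paragraph. Unrolling the dichotomy with $K = K(\varepsilon,A)$ a fixed large constant, each narrow step contributes a multiplicative factor of the form $C_A\,K^{n+1-np/2}$ (the power of $K$ coming from parabolic rescaling of $Ef_\tau$, the induced change of measure $d\mu_R \mapsto d\tilde\mu_{R/K^2}$ with measure constant $K^{O(1)}C_\mu$, and the normalization $\|\tilde g_\tau\|_2 \sim K^{n/2}\|f_\tau\|_2$), and the exponent $n+1 - np/2$ is strictly negative precisely when $p > \frac{2(n+1)}{n}$. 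Choosing $K$ large enough (depending only on $\varepsilon$ and $A$) makes $C_A K^{n+1-np/2} < 1$, so the unrolled series $\sum_j \bigl(C_A K^{n+1-np/2}\bigr)^j$ converges geometrically; equivalently, phrased as an induction on $R$, the inductive constant does not accumulate at all. This is the correct resolution, it is what renders the $K$-dependent implicit constant in \eqref{eq:lem:BLptoLp:assum} harmless, and it is the argument in \cite{DGOWWZ21}. Once this is fixed, the remaining ingredients you list — the one-scale broad/narrow inequality, preservation (up to $K^{O(1)}$) of the class of $n$-dimensional measures under parabolic rescaling, and $\ell^p \hookleftarrow \ell^2$ for the sums over caps — are the right ones, and the argument goes through.
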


Since we can control the broad part using \eqref{eq:lem:BLptoLp:assum}, we may assume that $\|Ef\|_{L^p(d\mu_R)}$ is not dominated by $\|Ef\|_{BL^p(d\mu_R)}$. In this case, locally $f$ must be concentrated in caps $\tau$ in the $K^{-1}$ neighborhood of $A$ many points, each of which is determined by a $1$-subspace $V_a$. There can only be $O_A(1)$ many such $\tau$, we then apply parabolic rescaling and then induction on $R$ to control the contribution from each $\tau$. For the details of the proof of Lemma \ref{lem:BLptoLp}, see \cite{DGOWWZ21}*{Lemma 5.2}.

\section{Weighted $L^p$ Restriction Estimates} \label{sec:WRE}
In this section, we introduce the linear and bilinear weighted restriction estimates established in  \cite{DGOWWZ21} and \cite{DLWZ}. 

\begin{thm}[Linear weighted restriction estimate, \cite{DLWZ} ]\label{thm:LWRE}
    Let $n\geq 2$, $m \in [3,n+1]$ and 
    $$
    p_{w,m}= \left (\frac{1}{2} -\frac{1}{m(m-1)(m-2)}\right )^{-1},  \quad \gamma_{w,m} = \frac{m-1}{2m} - \frac{n+m-1}{2m(m-1)(m-2)}.
    $$ Suppose that $Z = Z(P_1,.,,,.P_{n+1-(m-1)})$ is a transverse complete intersection, where $P_1,...,$ $P_{n+1-(m-1)}$ are polynomials of degrees bounded above by $D_Z$. Suppose further that $f \in L^2(B^n(0,1))$ is concentrated in wave packets from $\T_Z(N)$. Then for any $R\geq 1$ and $n$-dimensional measure $\mu$ in $\R^{n+1}$,
    \begin{equation}
        \|Ef\|_{L^{p_{w,m}}(d\mu_{R})} \lessapprox N^{O(1)} R^{\gamma_{w,m}} \|f\|_{2}.
    \end{equation}
\end{thm}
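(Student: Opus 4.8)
Since Theorem \ref{thm:LWRE} is a linear weighted restriction estimate under a tangency hypothesis one dimension below that of the main inductive proposition, the plan is to prove it by induction on the scale $R$, with an inner induction on the dimension $m$, running the polynomial-partitioning trichotomy of \cite{Gu18} and \cite{DGOWWZ21} in the weighted setting. The guiding principle is that concentration of $f$ in wave packets that are $NR^{-1/2}$-tangent to the $(m-1)$-dimensional transverse complete intersection $Z$ makes the relevant Kakeya geometry \emph{effectively $(m-1)$-dimensional}; each round of polynomial partitioning then either closes the estimate or reduces to the same problem with the variety dimension dropped by one, and the descent terminates at $m=3$, where the surviving wave packets are tangent to a surface and one deduces the bound with exponent $\gamma_{w,3}=\tfrac{2-n}{12}$ from polynomial partitioning, $L^2$-orthogonality, and the bilinear refined Strichartz estimate of \cite{DGL17}.

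The first step is the usual list of reductions: apply the wave packet decomposition of Subsection \ref{sec:WPDec} to $f$, discard the packets outside $\T_Z(N)$ at the cost of a $\RapDec(R)\|f\|_2$ error, dyadically pigeonhole so that the surviving $f_{\theta,\nu}$ all have comparable $L^2$ mass and so that the weight $\mu_{Ef}$ is roughly constant on the $K^2$-balls carrying it, and normalize $\|f\|_2=1$. One then passes to a weighted broad norm adapted to the effective dimension $m-1$ (a broad--narrow, i.e.\ weighted $k$-broad, decomposition): the narrow part, where $Ef$ is dominated by $O_K(1)$ caps whose directions cluster near a hyperplane, is handled by parabolic rescaling and the induction hypothesis on $R$ --- the tangency hypothesis persists under rescaling, so the narrow obstruction that normally caps the exponent at $\tfrac{2(n+1)}{n}$ is milder here --- and it is this step that pins the Lebesgue exponent at $p_{w,m}$.

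For the broad part one runs Guth's polynomial partitioning on $B^{n+1}(0,R)$ with a polynomial of degree bounded by $D\le R^{\delta_{\text{deg}}}$. In the cellular case the weighted broad mass splits among $\sim D^{m-1}$ cells, each meeting only a small fraction of the tangent tubes; the key geometric input is a weighted form of the polynomial Wolff axioms for tubes tangent to an $(m-1)$-dimensional variety --- inside any ball of radius $\rho$ all such tubes lie in the $N\rho^{1/2}$-neighborhood of $Z$, hence pile onto any sub-ball with multiplicity governed by $m-1$ rather than $n$ --- and feeding this and the induction hypothesis into the cell-by-cell sum closes the estimate. In the algebraic case $Ef$ concentrates in a neighborhood of a hypersurface $Y$: the tubes transverse to $Y$ are treated as in \cite{Gu18}*{Section 8} using transversality and induction on $R$, while the tubes tangent to $Y$ are now tangent to the transverse complete intersection $Z\cap Y$ of dimension $m-2$, so one reapplies the whole argument with $m$ replaced by $m-1$. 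Carrying out this descent from dimension $m-1$ down to dimension $2$ and tracking the powers of $R$ at each level --- a fixed positive gain from the cellular and transverse parts against the loss paid when dropping to a lower-dimensional variety --- produces exactly $p_{w,m}=(\tfrac12-\tfrac{1}{m(m-1)(m-2)})^{-1}$ and $\gamma_{w,m}=\tfrac{m-1}{2m}-\tfrac{n+m-1}{2m(m-1)(m-2)}$, with the tangency slack accumulated over the $\sim\log R$ scales absorbed into the factor $N^{O(1)}$.

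The main obstacle is twofold. Geometrically, one must establish the weighted polynomial Wolff axioms: tangency to a variety of dimension $m-1$ and degree bounded by $D_Z\le R^{\delta_{\text{deg}}}$ must genuinely bound the $\mu_R$-mass of the union of the tangent tubes over every sub-ball, \emph{uniformly} over all $n$-dimensional measures $\mu$ and \emph{uniformly} through the $\sim\log R$ scales of the induction, which forces one to control how $D_Z$ and the tangency parameter $N$ deteriorate under the polynomial partitioning and the parabolic rescaling. Arithmetically, one must check that the exponents produced by the descent close at precisely $p_{w,m}$ and $\gamma_{w,m}$ and not at something weaker; it is the requirement that the cellular and transverse gains exactly balance the tangential loss at every level of the iteration that forces the cubic denominator $m(m-1)(m-2)$ in \eqref{eq:pwm}.
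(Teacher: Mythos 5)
Theorem~\ref{thm:LWRE} is not proved in this paper at all; it is imported as a black box from the companion work \cite{DLWZ} of Wang, Zhang and the authors, and the text gives no argument for it beyond the citation. There is therefore no ``paper's own proof'' against which to compare your sketch, and your proposal is solving a different problem than the one the manuscript sets itself.

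Beyond that mismatch, your sketch has a substantive gap precisely at the step you lean on hardest. You propose a broad--narrow (weighted $k$-broad) decomposition and handle the narrow part by parabolic rescaling, asserting that ``the tangency hypothesis persists under rescaling.'' But the paper explicitly singles this out as unresolved: in Subsection~\ref{sec:Sketch_of_pf} the authors remark that ``passing from weighted $k$-broad estimates to weighted linear estimates remains a challenging problem, with no prior work addressing this issue,'' and that they plan to pursue it in a subsequent paper. The only broad-to-linear passage that the manuscript actually carries out is Lemma~\ref{lem:BLptoLp}, which converts the weighted $2$-broad norm to $L^p(d\mu_R)$, and that lemma does \emph{not} carry a variety-tangency hypothesis through the rescaling; it is used at the very end, at full dimension $m=n+1$, where there is no residual tangency to preserve. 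Under parabolic rescaling of a $K^{-1}$-cap the pushforward of $\mu_R$ ceases to be (normalized as) an $n$-dimensional measure at the same scale, and the $NR^{-1/2}$-tangency condition to $Z$ distorts, so the induction hypothesis you want to invoke is not literally available. You also invoke ``a weighted form of the polynomial Wolff axioms,'' but no such input appears anywhere in this paper; what the paper feeds into the tangential sub-case is precisely Corollary~\ref{cor:LWRE}, i.e.\ Theorem~\ref{thm:LWRE} itself, which makes a self-contained proof by the same descent circular unless the recursion is restructured. Finally, the arithmetic is asserted rather than checked: nothing in the sketch explains why the cellular/transverse/tangential balance at each level should close exactly at the cubic denominators $m(m-1)(m-2)$ in $p_{w,m}$ and $\gamma_{w,m}$ rather than at some weaker pair, and the base case you describe uses the bilinear estimate of Theorem~\ref{thm:BWRE} (from \cite{DGOWWZ21}), not the refined Strichartz of \cite{DGL17} which is a related but distinct ingredient.
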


It is well-known that the broad norm $BL^p$ estimate is weaker than the corresponding $L^p$ estimate. As a corollary of Theorem \ref{thm:LWRE}, we prove the following broad norm version. 

\begin{cor}\label{cor:LWRE}
    Under the same assumptions as in Theorem \ref{thm:LWRE}, for any $A \geq 1$, we have
    \begin{equation}
        \|Ef\|_{BL_A^{p_{w,m}}(d\mu_{R})} \lessapprox N^{O(1)} R^{\gamma_{w,m}} \|f\|_{2}.
    \end{equation}
\end{cor}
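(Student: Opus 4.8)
The plan is to deduce the broad-norm bound in Corollary \ref{cor:LWRE} directly from the linear estimate in Theorem \ref{thm:LWRE}, using only the fact that the weighted broad norm is pointwise dominated by a sum of ordinary $L^p$ norms over a finitely-overlapping decomposition. First I would unwind the definition: by construction
$$
\|Ef\|_{BL_A^{p_{w,m}}(d\mu_R)}^{p_{w,m}} = \sum_{B_{K^2}\subset B^{n+1}(0,R)} \mu_{Ef}(B_{K^2}),
$$
and for each cube $B_{K^2}$ the quantity $\mu_{Ef}(B_{K^2})$ is a min over choices of $1$-subspaces $V_1,\dots,V_A$ of a max over the $O_K(1)$ many caps $\tau$ of $\int_{B_{K^2}}|Ef_\tau|^{p_{w,m}}\,d\mu_R$. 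The trivial bound is to drop the min (choose any admissible subspaces, or simply observe that a max over $\tau$ is at most a sum over $\tau$), giving
$$
\mu_{Ef}(B_{K^2}) \le \sum_{\tau} \int_{B_{K^2}} |Ef_\tau|^{p_{w,m}}\, d\mu_R .
$$

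Summing over the disjoint cubes $B_{K^2}$ covering $B^{n+1}(0,R)$ and using that these cubes tile the ball, I get
$$
\|Ef\|_{BL_A^{p_{w,m}}(d\mu_R)}^{p_{w,m}} \le \sum_{\tau} \|Ef_\tau\|_{L^{p_{w,m}}(d\mu_R)}^{p_{w,m}}.
$$
Now for each fixed cap $\tau$ of radius $K^{-1}$, the function $f_\tau$ is supported in $\tau$ and is still concentrated in wave packets from $\T_Z(N)$ (restricting to a frequency cap only removes wave packets, it does not create new tangency failures), so Theorem \ref{thm:LWRE} applies to $f_\tau$ and yields $\|Ef_\tau\|_{L^{p_{w,m}}(d\mu_R)} \lessapprox N^{O(1)} R^{\gamma_{w,m}} \|f_\tau\|_2$. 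Plugging this in,
$$
\|Ef\|_{BL_A^{p_{w,m}}(d\mu_R)}^{p_{w,m}} \lessapprox N^{O(1)} R^{p_{w,m}\gamma_{w,m}} \sum_\tau \|f_\tau\|_2^{p_{w,m}} .
$$
Since $p_{w,m}\ge 2$ and the number of caps $\tau$ is bounded by a constant depending only on $K$ (which is harmless because $R^\varepsilon$ absorbs $K$-dependent factors in the $\lessapprox$ convention), I use $\sum_\tau \|f_\tau\|_2^{p_{w,m}} \le \big(\sum_\tau \|f_\tau\|_2^2\big)^{p_{w,m}/2} \sim \|f\|_2^{p_{w,m}}$ by the almost-orthogonality of the cap decomposition. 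Taking $p_{w,m}$-th roots gives the claimed estimate.

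The only genuinely delicate point — and the step I would flag as the main obstacle, though it is really a bookkeeping issue rather than a mathematical one — is making sure the constants behave. The passage from the broad norm to a sum over $\tau$ introduces a factor of roughly $(\#\tau)$, which is $K^{O(n)}$; one must confirm this is swallowed by the $R^\varepsilon$ loss in the $\lessapprox$ notation, i.e. that $K$ is allowed to be a fixed large constant (possibly a small power of $R$) so that $K^{O(n)} = O_\varepsilon(R^\varepsilon)$. This is consistent with how $K$ is used throughout (in Lemma \ref{lem:BLptoLp} the constant is explicitly permitted to depend on $K$), so no real difficulty arises. A secondary check is that restricting $f$ to a cap $\tau$ preserves the hypothesis "concentrated in wave packets from $\T_Z(N)$"; this holds because $(Ef_\tau)$ decomposes into the sub-collection of wave packets $f_{\theta,\nu}$ with $\theta\subset\tau$, and $\sum_{(\theta,\nu)\notin\T_Z(N)}\|(f_\tau)_{\theta,\nu}\|_2 \le \sum_{(\theta,\nu)\notin\T_Z(N)}\|f_{\theta,\nu}\|_2 \le \RapDec(R)\|f\|_2$, which is still $\RapDec(R)\|f_\tau\|_2$ after pigeonholing away the caps with negligible mass. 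Hence the corollary follows.
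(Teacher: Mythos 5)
Your argument is correct and matches the paper's proof step for step: bound the broad norm by $\bigl(\sum_\tau\|Ef_\tau\|_{L^{p_{w,m}}(d\mu_R)}^{p_{w,m}}\bigr)^{1/p_{w,m}}$ by dropping the min and replacing the max by a sum, apply Theorem~\ref{thm:LWRE} cap by cap, then finish with $\ell^2\hookrightarrow\ell^{p_{w,m}}$ and almost-orthogonality of the $f_\tau$. The ``bookkeeping'' concern you raise about a $K^{O(n)}$ factor never actually arises, since the embedding $\sum_\tau a_\tau^{p}\le\bigl(\sum_\tau a_\tau^2\bigr)^{p/2}$ for $p\ge 2$ holds with constant $1$ independent of the number of caps.
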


\begin{proof}
    Let $\tau$ be balls of radius $K^{-1}$ and $f = \sum_{\tau} f_\tau$ as in the definition of broad norm in Subsection \ref{sec:weightedBL}. We have
    \begin{align*}
        \|Ef\|_{BL_A^{p_{w,m}}(d\mu_{R})} 
        &\leq \left (\sum_{\tau} \|Ef_\tau\|_{L^{p_{w,m}}(d\mu_R)}^{p_{w,m}} \right)^{1/p_{w,m}}\\
        &\lessapprox N^{O(1)} R^{\gamma_{w,m}} \left ( \sum_\tau \|f_\tau\|_{2}^{p_{w,m}} \right)^{1/p_{w,m}} \\
        &\leq N^{O(1)} R^{\gamma_{w,m}} \|f\|_{2}.
    \end{align*}
    The second-to-last inequality comes from applying Theorem \ref{thm:LWRE} on each $f_\tau$ while the last inequality comes from the $L^2$ orthogonality among $f_\tau$ and domination of $\ell^{p_{w,m}}$ by $\ell^2$ with $p_{w,m} \geq 2$.
\end{proof}

\begin{thm}[Bilinear weighted restriction estimate,  \cite{DGOWWZ21}*{Corollary 3.4}]\label{thm:BWRE}
    Let $n\geq 1$, $m \in [2,n+1]$ and $p_{b,w} = \frac{2(m+1)}{m}$. Suppose that $Z = Z(P_1,.,,,.P_{n+1-m})$ is a transverse complete intersection, where $P_1,...,P_{n+1-m}$ are polynomials of degrees bounded above by $D_Z$. Suppose further that $f_1,f_2 \in L^2(B^n(0,1))$ with supports separated by $\sim 1$ are concentrated in wave packets from $\T_Z(N)$. Then for any $R\geq 1$ and $n$-dimensional measure $\mu$ in $\R^{n+1}$,
    \begin{equation}
        \||Ef_1|^{1/2}|Ef_2|^{1/2}\|_{L^{p_{b,w}}(d\mu_R)} \lessapprox N^{O(1)} R^{-\frac{n+2-2m}{4(m+1)}} \|f_1\|_{2}^{1/2}\|f_2\|_{2}^{1/2}.
    \end{equation}
\end{thm}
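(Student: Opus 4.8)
The plan is to recover this estimate from the argument of \cite{DGOWWZ21}*{Section 3}. The idea is to combine a bilinear adjoint restriction estimate for the paraboloid (ultimately Tao's sharp bilinear theorem, in the spirit of \cite{TV2000}, packaged as a bilinear \emph{refined} Strichartz estimate as in \cite{DGL17}) with the $L^2$-orthogonality of the wave packet decomposition of Subsection \ref{sec:WPDec} and the $n$-dimensionality of $\mu$. The transverse complete intersection $Z$, together with the hypothesis that $f_1$ and $f_2$ are concentrated in wave packets from $\T_Z(N)$, serves to lower the \emph{effective} dimension of the problem from $n+1$ to $m$: on any ball $Q$ of radius $R^{1/2}$ meeting $N_{NR^{1/2}}Z$, every surviving wave packet of $f_1$ or $f_2$ points, up to an angle $O(NR^{-1/2})$, into the $m$-plane $T_{z_Q}Z$ for some $z_Q\in Z\cap 2Q$ (the bound $D_Z\le R^{\delta_{\text{deg}}}$ ensures that $Z$, and hence its tangent planes, wobble by only $R^{O(\delta_{\text{deg}})}$ across such a ball). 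Consequently, after a rotation, the restriction of $Ef_i$ to $N_{NR^{1/2}}Z$ behaves—modulo $N^{O(1)}$ factors and rapidly decaying tails—like the extension operator $E^{(m)}g_i$ of a truncated $(m-1)$-dimensional paraboloid in $\R^m$, essentially constant in the $n+1-m$ directions transverse to $Z$; here $g_i$ is produced from $f_i$ by integrating out the transverse frequency coordinates, so that $\|g_i\|_{L^2}\lesssim (NR^{-1/2})^{(n+1-m)/2}\|f_i\|_2$. Since $\supp f_1$ and $\supp f_2$ are $\sim 1$-separated while their transverse frequency extent is only $O(NR^{-1/2})$, the supports of $g_1$ and $g_2$ remain $\sim 1$-separated, and the bilinear theory in $\R^m$ applies.

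Concretely, I would proceed in three steps. First, perform the wave packet decomposition of $f_1,f_2$ at scale $R^{1/2+\delta}$ and dyadically pigeonhole so that all active packets $f_{i,\theta,\nu}$ have comparable $L^2$-norm, and so that the $R^{1/2}$-balls $Q$ tiling $B^{n+1}(0,R)$ carry a common number $\lambda_i$ of wave packets of $f_i$ and a common dyadic value of $\mu_R(Q)$; the balls $Q$ outside $N_{NR^{1/2}}Z$ contribute only $\RapDec(R)$ and are discarded. Second, on each surviving $Q$ pass to the effective dimension $m$ as above and apply a bilinear refined Strichartz estimate for the paraboloid in $\R^m$; the crucial feature of such an estimate is that it improves on the trivial bound by a favorable power of the per-ball packet count $\lambda_i$. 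Combining this with the locally constant property, with $\mu_R(Q)\lesssim R^{n/2}$, and with the per-ball analogue of the bound on $\|g_i\|_{L^2}$, one gets an estimate for $\int_Q |Ef_1|^{p_{b,w}/2}|Ef_2|^{p_{b,w}/2}\,d\mu_R$ in terms of $\lambda_1,\lambda_2$ and $\|f_1\|_2,\|f_2\|_2$. Third, sum over the surviving $Q$ using the two global constraints: $\sum_Q\lambda_i\lesssim R^{1/2+O(\delta)}$ times the total number of wave packets of $f_i$ (each tangent tube meets only $\lesssim R^{1/2+O(\delta)}$ of the $R^{1/2}$-balls, being a straight segment of length $R$ contained in $N_{NR^{1/2}}Z$), and $\sum_Q\mu_R(Q)\le\mu_R(B^{n+1}(0,R))\lesssim R^n$; together with $\|f_i\|_2^2$ being comparable to (number of wave packets of $f_i$) times (the common $L^2$-value), optimizing these relations against the per-ball bound produces the power $R^{-\frac{n+2-2m}{4(m+1)}}$ at the exponent $p_{b,w}=\frac{2(m+1)}{m}$, all auxiliary losses being of the admissible form $N^{O(1)}R^{O(\delta_{\text{deg}})}\lessapprox N^{O(1)}$.

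The main obstacle I anticipate is the geometric reduction to effective dimension $m$ and the accompanying bookkeeping. One has to argue carefully that tangency really does confine the directions of \emph{all} tubes through a fixed $R^{1/2}$-ball to an $O(NR^{-1/2})$-neighborhood of a single $m$-plane—even though a tube is long (length $R$) and could a priori be tangent to $Z$ at places where the tangent plane has already turned—and that the errors incurred by replacing $Z$ with a tangent plane, by the curvature of $Z$, and by the transverse-variable phases are all swallowed by $N^{O(1)}$ and $R^{O(\delta_{\text{deg}})}$. A secondary nuisance is managing the several nested layers of dyadic pigeonholing (packet norms, per-ball packet counts, per-ball measure values, the number of $R^{1/2}$-balls met by a single packet) so that the accumulated constants stay $\lessapprox 1$, and verifying that the $\sim 1$-separation of the supports genuinely persists under the projection onto the $(m-1)$-dimensional paraboloid.
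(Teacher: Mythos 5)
Theorem~\ref{thm:BWRE} is not proved in this paper: it is quoted directly from \cite{DGOWWZ21}*{Corollary 3.4}, so there is no in-paper argument for your sketch to track, and the correct baseline for comparison is the proof in that reference. Against that baseline, your outline collects the right raw material --- wave packet decomposition at scale $R^{1/2+\delta}$, dyadic pigeonholing of packet norms, per-cube packet counts and per-cube measure, the locally constant property, the $n$-dimensionality bounds $\mu_R(Q)\lesssim R^{n/2}$ and $\sum_Q\mu_R(Q)\lesssim R^n$, and the trivial per-tube cube count $\lesssim R^{1/2+O(\delta)}$ --- and your per-ball observation that all tangent packets through a fixed $R^{1/2}$-ball point within $O(NR^{-1/2})$ of a single $m$-plane is a genuine feature of the tangency hypothesis.

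Still, there are two concrete problems. First, the ``effective dimension $m$'' device is not a well-posed replacement for the actual mechanism. You propose to apply a bilinear refined Strichartz estimate ``for the paraboloid in $\R^m$'' on each ball $Q$, but a refined Strichartz estimate is a \emph{global} bound over a union of $R^{1/2}$-cubes, not something one invokes on a single such cube (on which $|Ef_i|$ is already essentially constant by local constancy). What \cite{DGOWWZ21} actually does is stay in the ambient $\R^{n+1}$ and prove a bilinear refined Strichartz estimate whose gain is governed by the number of wave packets through each cube; the $NR^{-1/2}$-tangency to $Z$ is consumed as a \emph{counting} input via Guth's transverse equidistribution lemmas (\cite{Gu18}*{Lemmas 5.7, 7.4, 7.5}), not as a pretext to literally swap $E$ for an $(m-1)$-dimensional extension operator $E^{(m)}$. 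Second, the exponent bookkeeping is not even heuristically confirmed. The $L^2$-Strichartz exponent for the $(m-1)$-dimensional paraboloid in $\R^m$ is $\frac{2(m+1)}{m-1}$, and the geometric-mean form of Tao's bilinear restriction exponent in $\R^m$ is $\frac{2(m+2)}{m}$; both are strictly larger than $p_{b,w}=\frac{2(m+1)}{m}$. The latter is a genuinely \emph{weighted} exponent, and both it and the power $R^{-\frac{n+2-2m}{4(m+1)}}$ emerge only from the final optimization over the pigeonholed parameters together with the $n$-dimensionality of $\mu$. That optimization --- which you summarize as ``optimizing these relations \dots{} produces the power'' --- is the entire quantitative content of the theorem and is left unperformed. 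As written, the proposal is a roadmap with the right signposts but no argument at its center.
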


It is also well-known that the $2$-broad norm $BL^p$ estimate is weaker than the corresponding bilinear $L^p$ estimate. See for instance \cite{GHI19}*{Proposition 6.4}.

\begin{cor}\label{cor:BWRE}
    Under the same assumptions as in Theorem \ref{thm:BWRE}, for any $A\geq 1$ and $f \in L^2(B^{n}(0,1))$ concentrated in wave packets from $\T_Z(N)$, we have
    \begin{equation}
        \|Ef\|_{BL_A^{p_{b,w}}(d\mu_{R})} \lessapprox N^{O(1)} K^{O(1)} R^{-\frac{n+2-2m}{4(m+1)}} \|f\|_{2},
    \end{equation}
    where $K$ is the large constant used to define the broad norm.
\end{cor}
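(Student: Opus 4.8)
The plan is to deduce the $2$-broad estimate from the bilinear estimate of Theorem \ref{thm:BWRE} by the standard bilinear-to-broad reduction, carried out at the scale $K^{-1}$ on which the caps $\tau$ live. First I would fix an arbitrary ball $B_{K^2} \subset B^{n+1}(0,R)$ and examine the quantity $\mu_{Ef}(B_{K^2})$. By definition, after choosing the minimizing $1$-subspaces $V_1,\dots,V_A$, the contribution is controlled by $\int_{B_{K^2}} |Ef_{\tau^*}|^{p_{b,w}} d\mu_R$ for the worst cap $\tau^*$ making a large angle with every $V_a$. The elementary observation (see \cite{Gu18}*{Lemma 5.5} or \cite{GHI19}*{Proposition 6.4}) is that if $\tau^*$ is $K^{-1}$-transverse to all $A$ chosen $1$-subspaces, then one can find a second cap $\tau'$, separated from $\tau^*$ by $\gtrsim K^{-1}$, with $\int_{B_{K^2}}|Ef_{\tau^*}|^{p_{b,w}} d\mu_R \lesssim K^{O(1)} \int_{B_{K^2}} |Ef_{\tau^*}|^{p_{b,w}/2}|Ef_{\tau'}|^{p_{b,w}/2} d\mu_R$; the point is that among the $O(K^n)$ caps, the largest one must have a companion pointing in a genuinely different direction once $A$ is large enough relative to the combinatorics of directions, and the exponent $p_{b,w}/2$ matches the bilinear form. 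Summing this over all $B_{K^2} \subset B^{n+1}(0,R)$ gives
\begin{equation*}
    \|Ef\|_{BL_A^{p_{b,w}}(d\mu_R)}^{p_{b,w}} \lesssim K^{O(1)} \sum_{\tau \neq \tau': \dist(\tau,\tau')\gtrsim K^{-1}} \||Ef_{\tau}|^{1/2}|Ef_{\tau'}|^{1/2}\|_{L^{p_{b,w}}(d\mu_R)}^{p_{b,w}}.
\end{equation*}

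Next I would apply Theorem \ref{thm:BWRE} to each bilinear pair $(f_\tau, f_{\tau'})$. This requires checking that the hypotheses transfer: the supports of $f_\tau$ and $f_{\tau'}$ are separated by $\sim K^{-1}$ rather than by $\sim 1$, so a parabolic rescaling by a factor $\sim K$ is needed to put them in the normalized configuration, and one must track how the variety $Z$, the tangency parameter $N$, the radius $R$, and the measure $\mu$ behave under this rescaling. Since $K$ is a fixed constant (independent of $R$), all the $K$-dependent factors produced by rescaling are harmless $K^{O(1)}$ losses, and the rescaled variety remains a transverse complete intersection of comparable degree while the rescaled wave packets stay tangent with the parameter $N$ changed by at most a $K^{O(1)}$ factor; the measure $\mu$ rescales to another $n$-dimensional measure with comparable constant. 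Thus Theorem \ref{thm:BWRE} yields $\||Ef_\tau|^{1/2}|Ef_{\tau'}|^{1/2}\|_{L^{p_{b,w}}(d\mu_R)} \lessapprox N^{O(1)} K^{O(1)} R^{-\frac{n+2-2m}{4(m+1)}} \|f_\tau\|_2^{1/2}\|f_{\tau'}\|_2^{1/2}$.

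Finally I would sum over the $O(K^{2n})$ pairs $(\tau,\tau')$. Plugging the bilinear bound into the displayed inequality and using $\|f_\tau\|_2^{1/2}\|f_{\tau'}\|_2^{1/2} \leq \tfrac12(\|f_\tau\|_2 + \|f_{\tau'}\|_2)$ together with the $L^2$ orthogonality $\sum_\tau \|f_\tau\|_2^2 \sim \|f\|_2^2$ (and $\|f\|_2 \lesssim (\sum_\tau\|f_\tau\|_2^{p_{b,w}})^{1/p_{b,w}}$ is the wrong direction, so instead bound $\sum_{\tau,\tau'}\|f_\tau\|_2^{p_{b,w}/2}\|f_{\tau'}\|_2^{p_{b,w}/2} \leq (\sum_\tau \|f_\tau\|_2^{p_{b,w}/2})^2 \leq K^{O(1)}(\sum_\tau\|f_\tau\|_2^2)^{p_{b,w}/2} = K^{O(1)}\|f\|_2^{p_{b,w}}$ using $p_{b,w}/2 \le 2$ and Cauchy--Schwarz over the $O(K^n)$ caps), one arrives at $\|Ef\|_{BL_A^{p_{b,w}}(d\mu_R)}^{p_{b,w}} \lessapprox N^{O(1)} K^{O(1)} R^{-\frac{(n+2-2m)p_{b,w}}{4(m+1)}}\|f\|_2^{p_{b,w}}$, which is the claim after taking $p_{b,w}$-th roots. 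I expect the main obstacle to be the bookkeeping in the rescaling step of the second paragraph — verifying carefully that transverse complete intersection, tangency to $Z$, and the $n$-dimensional measure condition all survive the $K$-dilation with only $K^{O(1)}$ losses — rather than anything genuinely deep; this is exactly the kind of argument carried out in \cite{Gu18} and \cite{GHI19} in the unweighted setting, and the weighted version goes through verbatim because the defining constant $C_\mu$ of an $n$-dimensional measure is dilation-invariant in the relevant sense.
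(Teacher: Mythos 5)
Your overall plan is the right one—and it matches the paper's: reduce $BL^p_A$ to $A=1$, show the dominant cap $\tau_1$ has a companion $\tau_2$ making angle $\gtrsim K^{-1}$ with it and contributing comparably, pass to a bilinear quantity, and invoke Theorem \ref{thm:BWRE} after a parabolic rescaling. Your second paragraph on the rescaling bookkeeping and your third paragraph on summing over pairs are both fine. But there is a genuine gap in the first step.

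You assert the pointwise inequality
$\int_{B_{K^2}}|Ef_{\tau^*}|^{p}\,d\mu_R \lesssim K^{O(1)}\int_{B_{K^2}}|Ef_{\tau^*}|^{p/2}|Ef_{\tau'}|^{p/2}\,d\mu_R$
as "the elementary observation." What the min--max argument actually gives is the product of two \emph{norms}:
$\int_{B_{K^2}}|Ef_{\tau_1}|^{p}\,d\mu_R \lesssim \bigl(\int_{B_{K^2}}|Ef_{\tau_1}|^{p}\,d\mu_R\bigr)^{1/2}\bigl(\int_{B_{K^2}}|Ef_{\tau_2}|^{p}\,d\mu_R\bigr)^{1/2}$,
and by Cauchy--Schwarz the $L^p$ norm of the geometric mean is \emph{smaller} than this, not larger. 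So the inequality you wrote does not follow, and in fact it can fail: take $\mu_R$ restricted to $B_{K^2}$ to put equal mass on two unit-separated points $x_1,x_2$ with $|Ef_{\tau_1}|$ large at $x_1$ and tiny at $x_2$, and $|Ef_{\tau_2}|$ the other way around. Then the two $L^p(d\mu_R)$ norms are comparable and large, but $\int_{B_{K^2}}|Ef_{\tau_1}|^{p/2}|Ef_{\tau_2}|^{p/2}\,d\mu_R$ is essentially zero. The caps $\tau_i$ have diameter $K^{-1}$, so $|Ef_{\tau_i}|$ is only locally constant on unit balls, not on all of $B_{K^2}$; there is nothing forcing the two functions to be large at the same places.

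The paper closes exactly this gap with a pigeonholing and translation/modulation step, which is also what \cite{GHI19}*{Proposition 6.4} does in the unweighted setting (so your citation is to the right place, but the content is more than you gave it credit for). One pigeonholes to unit balls $B_1,B_2\subset B_{K^2}$ carrying a $K^{-O(1)}$ fraction of $\|Ef_{\tau_1}\|_{L^p(B_{K^2},d\mu_R)}$ and $\|Ef_{\tau_2}\|_{L^p(B_{K^2},d\mu_R)}$ respectively, uses that $|Ef_{\tau_i}|$ is essentially constant on each unit ball, and then replaces $f_{\tau_2}$ by the modulated $f_{\tau_2,v}(\xi)=f_{\tau_2}(\xi)e^{i(v'\cdot\xi+v_{n+1}|\xi|^2)}$ with $v=c(B_2)-c(B_1)$, so that $Ef_{\tau_2,v}$ on $B_1$ is the translate of $Ef_{\tau_2}$ on $B_2$. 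Only after this alignment does the product of norms become comparable to the norm of the pointwise product. The modulation is harmless for everything downstream ($\|f_{\tau_2,v}\|_2=\|f_{\tau_2}\|_2$, supports and tangency unchanged), and there are only $K^{O(1)}$ choices of $v$, so the rest of your argument then goes through as written.
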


The proof is included for completeness.
 
\begin{proof}
    The following argument works as long as $p \geq 2$. We denote $p_{b,w} = p$.
    Since 
    $$
    \|Ef\|_{BL_A^p(d\mu_{R})} \leq \|Ef\|_{BL_1^p(d\mu_{R})},
    $$
    it suffices to prove the case where $A=1$.
    
    Let $B_{K^2} \subset B^{n+1}(0,R)$ be a ball of radius $K^2$. Let $V$ and $\tau_1$ be the $1$-subspace and the $K^{-1}$ ball that realise the minimum and the maximum in the definition of $\mu_{Ef}(B_{K^2})$ respectively. We have
    $$
    \mu_{Ef} (B_{K^2}) = \|Ef_{\tau_1}\|_{L^p(B_{K^2},d\mu_R)}^p.
    $$
    
    We claim that there exists a $K^{-1}$ ball $\tau_2$ such that Angle$(G(\tau_1),G(\tau_2))\gtrsim K^{-1}$ and
    $$
    \mu_{Ef} (B_{K^2}) < 2\|Ef_{\tau_2}\|_{L^p(B_{K^2},d\mu_R)}^p.
    $$

    Suppose otherwise, the $1$-subspace that points to $G(\tau_1)$ would have realised a smaller $\mu_{Ef}(B_{K^2})$. We have shown the claim.

    In conclusion, we have found $\tau_1,\tau_2$ separated by $\gtrsim K^{-1}$ such that
    \begin{equation}\label{eq:temp3.5'}
        \mu_{Ef}(B_{K^2}) \lesssim \|Ef_{\tau_1}\|^{p/2}_{L^p(B_{K^2},d\mu_R)}\|Ef_{\tau_2}\|^{p/2}_{L^p(B_{K^2},d\mu_R)}.
    \end{equation}

    By pigeonholing, there are unit balls $B_1,B_2$ in $B_{K^2}$ centered on lattice points $(\frac{1}{2}\Z)^{n+1}$ such that
    $$
    \|Ef_{\tau_1}\|^{1/2}_{L^p(B_{K^2},d\mu_R)}\|Ef_{\tau_2}\|^{1/2}_{L^p(B_{K^2},d\mu_R)} \lesssim K^{O(1)}\|Ef_{\tau_1}\|^{1/2}_{L^p(B_1,d\mu_R)}\|Ef_{\tau_2}\|^{1/2}_{L^p(B_2,d\mu_R)}.
    $$
    
    On the other hand, $|Ef_{\tau_1}|$ and $|Ef_{\tau_2}|$ are essentially constants on $B_1$ and $B_2$ respectively by the locally constant property. Let $v=(v',v_{n+1}) = c(B_2)-c(B_1) \in \R^{n+1}$ be the difference of the centers of $B_1$ and $B_2$ so that $Ef_{\tau_2,v}$ on $B_1$ is a translated copy of $Ef_{\tau_2}$ on $B_2$, where 
    $$
    f_{\tau_2,v}(\xi): = f_{\tau_2}(\xi) e^{i(v' \cdot \xi + v_{n+1} |\xi|^2)}, \quad Ef_{\tau_2,v}(x ) = Ef_{\tau_2}(x+v).
    $$ 
    Therefore, $|Ef_{\tau_1}|$ and $|Ef_{\tau_2,v}|$ are essentially constants on $B_1$ and
    \begin{align*}
        \|Ef_{\tau_1}\|^{1/2}_{L^p(B_1,d\mu_R)}\|Ef_{\tau_2}\|^{1/2}_{L^p(B_2,d\mu_R)} &\sim \||Ef_{\tau_1}|^{1/2}|Ef_{\tau_2,v}|^{1/2}\|_{L^p(B_1,d\mu_R)} \\ &\leq \||Ef_{\tau_1}|^{1/2}|Ef_{\tau_2,v}|^{1/2}\|_{L^p(B_{K^2},d\mu_R)}. 
    \end{align*}

    In summary, we interchanged the geometric average and the $L^p$ norm over $B_{K^2}$ on the right hand side of \eqref{eq:temp3.5'} at a cost of $K^{O(1)}$ and a harmless modulation $f_{\tau_2,v}$:
    \begin{equation}
        \mu_{Ef}(B_{K^2}) \lesssim K^{O(1)}\||Ef_{\tau_1}|^{1/2}|Ef_{\tau_2,v}|^{1/2}\|_{L^p(B_{K^2},d\mu_R)}^p.
    \end{equation}

    Note that there are $K^{O(1)}$ choices for $\tau_1,\tau_2$ and $v$. Summing over $B_{K^2}, \tau_1,\tau_2$ and $v$, we obtain
    \begin{equation}\label{eq:temp3.7}
        \|Ef\|_{BL_1^{p}(d\mu_R)}^p\lesssim K^{O(1)} \sup_{\substack{v \in B(0,K^2) \\ K^{-1}\text{-separated caps }\tau_1,\tau_2 }}\||Ef_{\tau_1}|^{1/2}|Ef_{\tau_2,v}|^{1/2}\|_{L^p(d\mu_R)}^p . 
    \end{equation}
    Since $f_{\tau_1}$ and $f_{\tau_2,v}$ have supports $\tau_1$ and $\tau_2$ separated by $\gtrsim K^{-1}$, Theorem \ref{thm:BWRE} together with a parabolic rescaling gives
    \begin{equation}\label{eq:temp3.8'}
        \||Ef_{\tau_1}|^{1/2}|Ef_{\tau_2,v}|^{1/2}\|_{L^p(d\mu_R)} \lessapprox N^{O(1)} K^{O(1)} R^{-\frac{n+2-2m}{4(m+1)}} \|f_{\tau_1}\|_{2}^{1/2}\|f_{\tau_2,v}\|_{2}^{1/2}.
    \end{equation}
    By combining \eqref{eq:temp3.7}, \eqref{eq:temp3.8'} and the equality $\|f_{\tau_2,v}\|_2 = \|f_{\tau_2}\|_2$, we get
    \begin{align*}
        \|Ef\|_{BL_1^{p}(d\mu_R)}&\lessapprox N^{O(1)} K^{O(1)} R^{-\frac{n+2-2m}{4(m+1)}}  \sup_{K^{-1}\text{-separated caps }\tau_1,\tau_2} \|f_{\tau_1}\|_{2}^{1/2}\|f_{\tau_2}\|_{2}^{1/2} \\
        &\leq  N^{O(1)} K^{O(1)} R^{-\frac{n+2-2m}{4(m+1)}} \|f\|_2,
    \end{align*}
    as desired.
\end{proof}

\section{Polynomial Partitioning} \label{sec:PolyPart}

In this section, we prove the main inductive proposition via polynomial partitioning. 

Recall the definition of $p_m, \gamma_m$, $3\leq m \leq n+1$ in equations \eqref{eq:def_p_m,alpha_m} to \eqref{eq:p_n+1}. In addition, we define $p_2 = p_3$ and $\gamma_2 = \gamma_3$. As outlined in equations \eqref{eq:def_p_m,alpha_m} to \eqref{eq:p_n+1}, we have the following quantities:

\begin{equation}
    \alpha_m = 
    \begin{cases}
    1 \quad \quad &\text{ if } m=2; \\
    \frac{m^2-3m}{m^2-2m-2} \quad &\text{ if } 3\leq m \leq M; \\
    \alpha_{M+1}  = \frac{\gamma_{w,M+1}}{\gamma_{w,M+1}-\gamma_M} \quad &\text{ if } m=M+1; \\
    1 \quad &\text{ if } M+2 \leq m \leq n+1.
    \end{cases}
\end{equation}

\begin{equation}
    p_m = 
    \begin{cases}
    3 \quad &\text{ if } m=2,3; \\
    \frac{2m}{m-1} \quad &\text{ if } 4\leq m \leq M; \\
     \left (\frac{\alpha_{M+1}}{p_{M}} +  \frac{1-\alpha_{M+1}}{p_{w,M+1}}\right)^{-1} \quad &\text{ if } M+1 \leq m \leq n+1.
    \end{cases}
\end{equation}

\begin{equation}
    \gamma_m = 
\begin{cases}
    \frac{2-n}{12} \quad &\text{ if } m=2,3; \\
    \alpha_m \gamma_{m-1} + (1-\alpha_m) \gamma_{w,m} \quad &\text{ if } 4\leq m \leq M; \\
    0 \quad &\text{ if } M+1 \leq m \leq n+1.
\end{cases}
\end{equation}
Here $M$ is the largest integer in $[3,n+1]$ such that $\gamma_m$ is negative for all integers $m \in [3,M]$, and recall from Theorem \ref{thm:LWRE} that  $\gamma_{w,m}$ is the exponent of $R$ which appears on the right-hand side of the weighted $L^{p_{w,m}}$ estimate for $m-1$ dimensional algebraic variety in $\R^{n+1}$.

The exponents are chosen to meet the following three key properties:
\begin{equation} \label{eq:expo_prop1}
        p_m \geq \frac{2m}{m-1} \quad \text{for }m\geq 3,
    \end{equation}
\begin{equation}
        \gamma_m \leq 0,
    \end{equation} 
\begin{equation} \label{eq:expo_prop3}
        \frac{1}{p_m} = \frac{\alpha_m}{p_{m-1}} +  \frac{1-\alpha_m}{p_{w,m}}, \quad \gamma_m = \alpha_m \gamma_{m-1} + (1-\alpha_m) \gamma_{w,m}, \quad \alpha_m \in [0,1].
    \end{equation}
The last property can be interpreted as the pair $(\frac{1}{p_m},\gamma_m)$ lying on the line segment joining $(\frac{1}{p_{m-1}}, \gamma_{m-1})$ and $(\frac{1}{p_{w,m}}, \gamma_{w,m})$.

We first show that $p_{M+1} \geq \frac{2(M+1)}{M}$. Let $(\frac{M}{2(M+1)},\Tilde{\gamma})$ be the point lying on the line segment joining the points $(\frac{1}{p_M}, \gamma_M)$ and $(\frac{1}{p_{w,M+1}} , \gamma_{w,M+1})$. Since we define $M$ to be the largest integer so that $\gamma_M$ is negative, we have $\Tilde{\gamma} \geq 0$. Moreover, $(\frac{1}{p_{M+1}},\gamma_{M+1})$ is also on the same line segment with $\gamma_{M} < 0 = \gamma_{M+1} \leq \Tilde{\gamma} < \gamma_{w,M+1}$. We conclude that $\frac{M-1}{2M}<\frac{1}{p_{M+1}} \leq \frac{M}{2(M+1)} < \frac{1}{p_{w,M+1}}$. In particular, we have $p_{M+1} \geq \frac{2(M+1)}{M}$.

All other properties can be seen directly from the definition of $\alpha_m,p_m$ and $\gamma_m$.

\begin{prop}\label{prop:induction}
        Let $n\geq 3$. For all $\varepsilon>0$, there exists a large constant $\bar A >1$ and small constants $0<\delta_{\deg{}} \ll \delta\ll \delta_{n} \ll... \ll \delta_2 \ll \varepsilon$ such that the following holds. Let $m \in [2,n+1]$ be an integer, and $p_m,\gamma_m$ be defined above. Suppose that $Z = Z(P_1,.,,,.P_{n+1-m})$ is a transverse complete intersection, where $P_1,...,P_{n+1-m}$ are polynomials of degrees bounded above by $D_Z \leq R^{\delta_{\deg{}}}$. Suppose further that $f \in L^2(B^n(0,1))$ is concentrated in wave packets from $\T_Z(R^{\delta_m})$. Then for any $1\leq A \leq \bar A$, $R\geq 1$, $p\geq p_m$ and $n$-dimensional measure $\mu$ in $\R^{n+1}$,
        \begin{equation}\label{eq:prop:induction}
            \|E f \|_{BL^{p}_A(d\mu_R)} \leq C(K,\varepsilon,m,D_Z) R^{m\varepsilon} R^{\delta (\log \bar A - \log A)} R^{\gamma_m} \|f\|_2.
        \end{equation}
\end{prop}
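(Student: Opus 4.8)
The plan is to prove Proposition \ref{prop:induction} by a double induction: an outer induction on the dimension $m$ of the algebraic variety $Z$, and, for each fixed $m$, an inner induction on the radius $R$ and on the number of allowed directions $A$. The base case $m=2,3$ is handled directly by polynomial partitioning combined with the bilinear weighted restriction estimate: we apply the polynomial partitioning machinery of Guth \cite{Gu18} at the level $m=3$, and in the resulting cellular and transverse cases close the induction using \eqref{eq:expo_prop1} (which holds with equality $p_3=3=\frac{2\cdot 3}{3-1}$), while the tangential case at $m=3$ bottoms out in a genuinely $2$-dimensional tangency situation where Corollary \ref{cor:BWRE} applies and produces exactly the exponent $\gamma_3 = \frac{2-n}{12}$ (matching $-\frac{n+2-2m}{4(m+1)}$ at $m=2$). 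The $R^{\delta(\log\bar A - \log A)}$ factor is the bookkeeping device that lets the inner induction on $A$ absorb the $O(1)$-fold loss each time a triangle/H\"older inequality for the broad norm splits one $A$-broad quantity into several $A'$-broad quantities with $A' < A$; I would set up the constants $\delta_{\deg} \ll \delta \ll \delta_n \ll \cdots \ll \delta_2 \ll \varepsilon$ precisely so that these losses and the $R^{m\varepsilon}$ gains are consistent across all $m$.

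For the inductive step, fix $m \in [4, M]$ and assume the proposition for $m-1$. Given $f$ concentrated in wave packets tangent to an $m$-dimensional transverse complete intersection $Z$, I run Guth's polynomial partitioning: choose a polynomial $P$ of degree $D = R^{\delta_m}$ (roughly) whose zero set $Z(P)$ partitions $B^{n+1}(0,R)$ into $\sim D^m$ cells on which the broad norm $\|Ef\|_{BL^p_A(d\mu_R)}^p$ is roughly equidistributed. In the \emph{cellular case}, the wave packets relevant to a typical cell are few enough that rescaling plus the inner induction on $R$ closes, and the arithmetic works out precisely because $p \geq p_m \geq \frac{2m}{m-1}$, i.e.\ \eqref{eq:expo_prop1}; this is the content of \cite{Gu18}*{Subsection 8.1} adapted to the weighted setting. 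In the \emph{algebraic case}, $Ef$ localizes near an $(m-1)$-dimensional variety $Y$, and we split into the transverse sub-case and the tangential sub-case. The transverse sub-case follows \cite{Gu18}*{Subsection 8.4}: each tube enters the relevant neighborhood through a bounded number of the $\sim D$ pieces, a transversality gain of $D^{-(n+1-m)/2}$ appears, and the induction closes exactly when \eqref{ineq:transverse_rest}, $\frac12(n+1-m)(\frac{p_m}{2}-1) + p_m\gamma_m \geq 0$, holds — which one checks by direct computation with the definitions of $p_m$ and $\gamma_m$.

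The crux, and the step I expect to be the main obstacle, is the \emph{tangential sub-case}. Here most wave packets of $f$ are $R^{\delta_{m-1}}$-tangent (after adjusting the tangency parameter, which forces the careful hierarchy of $\delta$'s) to the $(m-1)$-dimensional transverse complete intersection $Y$. Now $f$ satisfies the hypotheses of \emph{both} the induction hypothesis at level $m-1$ — giving $\|Ef\|_{BL^{p_{m-1}}_A(d\mu_R)} \lessapprox R^{\gamma_{m-1}}\|f\|_2$ with $p_{m-1} = \frac{2(m-1)}{m-2} > p_m$ — and Corollary \ref{cor:LWRE} applied with the variety $Y$ of dimension $m-1$, giving $\|Ef\|_{BL^{p_{w,m}}_A(d\mu_R)} \lessapprox R^{\gamma_{w,m}}\|f\|_2$ (note $p_{w,m}$ is the exponent attached to $(m-1)$-dimensional tangency in Theorem \ref{thm:LWRE}). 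Interpolating these two broad-norm estimates at the weight $\alpha_m$ chosen in \eqref{eq:def_p_m,alpha_m} — which is exactly the $\alpha_m$ making $\frac{1}{p_m} = \frac{\alpha_m}{p_{m-1}} + \frac{1-\alpha_m}{p_{w,m}}$ — yields $\|Ef\|_{BL^{p_m}_A(d\mu_R)} \lessapprox R^{\alpha_m\gamma_{m-1} + (1-\alpha_m)\gamma_{w,m}}\|f\|_2 = R^{\gamma_m}\|f\|_2$ by property \eqref{eq:expo_prop3}. The delicate points are: (i) verifying that broad norms genuinely interpolate in this way (a H\"older inequality in the $B_{K^2}$-by-$B_{K^2}$ definition, which is the reason $A$ must be allowed to decrease and why the $R^{\delta(\log\bar A - \log A)}$ slack is present); (ii) checking that $f$ restricted to the tangential neighborhood of $Y$ still has the correct normalization so that $\|f\|_2$ is not inflated; and (iii) confirming $\alpha_m \in (0,1)$ and that $\gamma_m$ stays non-positive for $m \leq M$, which is the defining property of $M$. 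Finally, for $m \in [M+1, n+1]$ the same polynomial partitioning runs with $\gamma_m = 0$ and the constant exponent $p_{n+1}=\cdots=p_{M+1}$; the tangential sub-case at $m=M+1$ is where the definition \eqref{eq:alpha_M+1} of $\alpha_{M+1}$ and hence \eqref{eq:p_n+1} of $p_{n+1}$ comes from, and one checks $p_{M+1}\geq \frac{2(M+1)}{M}$ exactly as in the discussion preceding the proposition so that the cellular and transverse cases at these top levels still close. Summing over the $\sim D^m$ cells with the $R^{m\varepsilon}$ budget distributed across the $m$ levels of the outer induction completes the proof.
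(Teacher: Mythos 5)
Your proposal is correct and follows essentially the same strategy as the paper: base case at $m=2$ via Corollary \ref{cor:BWRE}, polynomial partitioning into cellular/algebraic cases, rescaling and translation for the transverse sub-case, and interpolation (via logarithmic convexity of the broad norm) of the $(m-1)$-dimensional induction hypothesis with Corollary \ref{cor:LWRE} in the tangential sub-case. The one small inaccuracy is the degree of the partitioning polynomial, which the paper takes to be a large constant $D(\varepsilon, D_Z)$ rather than $R^{\delta_m}$; this is what makes the cellular-case closing condition exactly $p_m \geq \frac{2m}{m-1}$ after choosing $D$ large.
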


In the statement of the proposition, the set of wave packets $\T_Z(R^{\delta_m})$ and the condition that $f$ is concentrated in wave packets from $\T_Z(R^{\delta_m})$ are defined in Subsection \ref{sec:tang_wavepackets}.


By Lemma \ref{lem:BLptoLp} and the relation $p_{n+1} \geq \frac{2(n+1)}{n}$, the $m = n+1$ case of Proposition \ref{prop:induction} implies Theorem \ref{thm:main_ext}. The remaining subsections are devoted to proving Proposition \ref{prop:induction}.

\subsection{The setup}

When $R$ is small, \eqref{eq:prop:induction} holds trivially by choosing a large enough implicit constant $C$. Moreover, the case $A = 1$ can be seen by a choosing large enough constant $\bar{A}$, depending on $\delta$. When $m=2$, Proposition \ref{prop:induction} is exactly Corollary \ref{cor:BWRE}. By induction, we may assume that \eqref{eq:prop:induction} holds with a smaller $A$, $R$ or $m$.

Following \cite{Gu18}, the case where most of the mass is concentrated near an $m-1$ dimensional variety is called \textit{algebraic}. More precisely, in the algebraic case, these is a transverse complete intersection $Y \subset Z$ of dimension $m-1$ defined by polynomials of degree at most $D(\varepsilon,D_Z)$, to be determined later, such that
\begin{equation}\label{eq:cellvsalg}
    \mu_{Ef} (N_{R^{1/2+\delta_m}}(Y) \cap B^{n+1}(0,
R)) \gtrsim \mu_{Ef} ( B^{n+1}(0,
R)).
\end{equation}
Otherwise, we call the case \textit{cellular}, or \textit{non-algebraic}.

The cellular case is considered in Subsection \ref{sec:cell_case} while the algebraic case is considered in Subsection \ref{sec:alg}.

\subsection{Cellular case}\label{sec:cell_case}

In this case, we assume that most of the mass is not concentrated near any $m-1$ dimensional variety, i.e. \eqref{eq:cellvsalg} does not hold for any transverse complete intersection $Y$ of one dimensional lower than $Z$.

Following \cite{Gu18}*{Section 8.1}, we can find a polynomial $P$ on $\R^{n+1}$ of degree at most $D=D(\varepsilon,D_Z)$ with the following properties:
$\R^{n+1} \setminus Z(P)$ is a union of $\sim D^m$ many open sets $O_i$, and 

\begin{equation}\label{eq:temp3.3}
    \|Ef\|_{BL_A^p(d\mu_R)}^p \lesssim D^m \|\chi_{O_i'}Ef\|_{BL_A^p(d\mu_R)}^p \lesssim D^m \|Ef_i\|_{BL_A^p(d\mu_R)}^p \quad \text{ for a definite proportion of }i, 
\end{equation}
where
\begin{equation*}
    O_i' = O_i \setminus  N_{R^{1/2+\delta}}(Z(P)) \quad \text{and } f_i = \sum_{(\theta,\nu):T_{\theta,\nu} \cap O_i' \neq \emptyset} f_{\theta,\nu}.
\end{equation*}

The key geometric observation is that each $T_{\theta, \nu}$ can pass through at most $D+1$ many $O_i'$. Therefore, by orthogonality we get
$$
\sum_{i} \|f_i\|_{L^2}^2 \lesssim D \|f\|_{L^2}^2.
$$
And by pigeonholing, there exists some $i'$ such that
\begin{equation}\label{eq:temp3.4}
    \|f_{i'}\|_{2}^2 \lesssim D^{1-m} \|f\|_{2}^2.
\end{equation}

Moreover, the induction hypothesis applied to $f_{i'}$ on balls of radius $R/2$ covering $B^{n+1}(0,R)$ gives
\begin{equation}\label{eq:temp3.5}
    \|Ef_{i'}\|_{BL_A^p(d\mu_R)} \leq 10^m C(K,\varepsilon,m,D_Z) R^{m\varepsilon} R^{\delta (\log \bar A - \log A)} R^{\gamma_m} \|f_{i'}\|_2
\end{equation}
for $p \geq p_m$.

Combining \eqref{eq:temp3.3}, \eqref{eq:temp3.4} and \eqref{eq:temp3.5}, we obtain
$$
\|Ef\|_{BL_A^p(d\mu_R)}^p \lesssim D^{m} D^{(1-m)\frac{p}{2}} \left (C(K,\varepsilon,m,D_Z) R^{m\varepsilon} R^{\delta (\log \bar A - \log A)} R^{\gamma_m} \|f\|_2\right)^p.
$$
The induction closes as long as $C D^{m + \frac{(1-m)p}{2}}\leq 1$, or $p > \frac{2m}{m-1}$ when $D=D(\varepsilon,D_Z)$ is chosen large enough. The endpoint case where $p= \frac{2m}{m-1}$ can be seen by H\"older's inequality and choosing $p$ larger than and close to $\frac{2m}{m-1}$. This finishes the proof for the cellular case as long as $p_m \geq \frac{2m}{m-1}$.

\subsection{Algebraic case}\label{sec:alg} In this case, we assume that most of the mass is concentrated near an $m-1$ dimensional transverse complete intersection $Y\subset Z$ defined by polynomials of degree at most $D(\varepsilon,D_Z)$, i.e.
\begin{equation}\label{eq:cellvsalg2}
    \mu_{Ef} (N_{R^{1/2+\delta_m}}(Y) \cap B^{n+1}(0,
R)) \gtrsim \mu_{Ef} ( B^{n+1}(0,
R)).
\end{equation}

In terms of wave packets, it suffices to consider $f$ concentrated in wave packets intersecting $N_{R^{1/2+\delta_m}}(Y)$. However, some wave packets may intersect $Y$ transversely. This situation is not covered by the induction hypothesis for $(m-1)$-dimensional algebraic varieties, which only gives estimates for $f$ concentrated in wave packets tangential to $Y$. The transverse sub-case is proved by induction on $R$, which requires re-doing the wave packet decomposition in a slightly smaller scale $\rho$. 

Define $\rho$ such that
$$
\rho^{\frac{1}{2}+\delta_{m-1}} = R^{\frac{1}{2} + \delta_m}.
$$
We subdivide $B^{n+1}(0,R)$ into a finitely overlapping collection of balls $B_j$ of radius $\rho$. We are interested in $B_j$ having non-empty intersection with the $R^{1/2+\delta_m}$-neighborhood of $Y$. Let $f_j$ be part of $f$ concentrated in wave packets intersecting such a $B_j$, i.e. $f_j = \sum_{(\tn) \in \T_j} f_{\tn}$, where
$$
\T_j := \{(\tn) : T_{\tn} \cap B_j \cap N_{R^{1/2+\delta_m}} (Y) \neq \emptyset\}.
$$

We further sort these wave packets according to whether they are tangent to $Y$ in $B_j$ or not. We say that $T_{\theta,\nu} \in \mathbb T_j$ is \emph{tangent} to $Y$ in $B_j$ if
\begin{equation}
T_{\theta,\nu}\cap 2B_j\subset N_{R^{1/2+\delta_m}}(Y) \cap 2B_j =
N_{\rho^{1/2+\delta_{m-1}}}(Y) \cap 2B_j
\end{equation}
and for any non-singular point $y\in Y \cap 2B_j\cap N_{10R^{1/2+\delta_m}}T_{\theta,\nu}$,
\begin{equation}
 \text{Angle}(G(\theta),T_y Y)\leq \rho^{-1/2+\delta_{m-1}}\,.
\end{equation}
Denote the tangent and transverse wave packets by
\[
\mathbb T_{j,{\rm tang}}:=\{(\theta,\nu)\in\mathbb T_j:\, T_{\theta,\nu} \text{ is tangent to } Y \text{ in } B_j\},\quad \mathbb T_{j,{\rm trans}}:=\mathbb T_j\setminus \mathbb T_{j,{\rm tang}},
\]and let
$$
f_\jtg = \sum_{(\tn) \in \T_\jtg} f_{\tn}, \quad f_\jtr = \sum_{(\tn) \in \T_\jtr} f_{\tn}.
$$

By \eqref{eq:cellvsalg2} and properties of $BL_A^p$ norm, we have
$$
\|Ef\|_{BL_A^p(d\mu_R)}^p \lesssim \sum_j \left ( \|Ef_\jtr \|_{BL_{A/2}^p(B_j;d\mu_R)}^p + \|Ef_\jtg \|_{BL_{A/2}^p(B_j;d\mu_R)}^p \right) + \rd(R)\|f\|_{2}^p.
$$

In the following two subsections, the transverse and the tangential terms will be estimated using induction on scale $R$ and dimension $m$.

\subsubsection{Transverse sub-case}\label{sec:alg_case1} Our goal in this subsection is to estimate the terms
$$
\sum_j \|Ef_\jtr \|_{BL_{A/2}^p(B_j;d\mu_R)}^p
$$
using induction on $R$.

First, we redo the wave packet decomposition for $f_\jtr$ at the smaller scale $\rho$. To apply induction hypothesis for $\rho < R$ directly to $f_\jtr$, we need $f_\jtr$ to be concentrated in scale $\rho$ wave packets $T_{\Tilde{\theta},\Tilde{\nu}}$ which are $\rho^{-1/2+\delta_m}$-tangent to $Z$ in $B_j$. However, this is not the case.

Note that $f_\jtr$ is concentrated in scale $R$ wave packets from $\T_Z(R^{\delta_m})$. Therefore, the angle $G(\Tilde{\theta)}$ the smaller scale wave packets $T_{\Tilde{\theta},\Tilde{\nu}}$ makes with $Z$ is at most $O(R^{-1/2+\delta_m} + \rho^{-1/2}) \leq \rho^{-1/2+\delta_m}$. We see that the angle condition is satisfied.

However, the farthest wave packets $T_{\Tilde{\theta},\Tilde{\nu}}$ are at a distance $\sim R^{1/2+\delta_m}$ from $Z$. Nevertheless, we can find a translation $Z_b = Z+b$ of $Z$ such that $T_{\Tilde{\theta},\Tilde{\nu}}$ is contained in $N_{\rho^{1/2+\delta_m}}(Z+b)$. Together with the angle condition, we conclude that $T_{\Tilde{\theta},\Tilde{\nu}}$ is $\rho^{-1/2+\delta_m}$-tangent to $Z_b$ in $B_j$. Above observations lead to the definition of $f_{\jtr,b}$:
$$
\T_{\jtr.b} : =\{(\Tilde{\theta},\Tilde{\nu}) : T_{\Tilde{\theta},\Tilde{\nu}} \cap B_j \cap N_{\rho^{1/2+\delta_m}}(Z_b) \neq \emptyset \}, \quad f_{\jtr,b} = \sum_{(\Tilde{\theta},\Tilde{\nu}) \in \T_{\jtr,b}} f_{\Tilde{\theta},\Tilde{\nu}}.
$$
By carefully choosing a set of translations $\{b\}$, we have
\begin{equation}\label{eq:temp3.8}
    \|Ef_\jtr \|_{BL_{A/2}^p(B_j;d\mu_R)}^p \lesssim (\log R) \sum_{b} \|Ef_{\jtr,b} \|_{BL_{A/2}^p(B_j;d\mu_R)}^p. 
\end{equation}
For details, see \cite{Gu18}*{Section 8.4}.

By the induction hypothesis at scale $\rho$, we get
\begin{equation}\label{eq:temp3.9}
    \|Ef_{\jtr,b} \|_{BL_{A/2}^p(B_j;d\mu_R)} \lesssim \rho^{m\varepsilon}\rho^{\delta(\log \Bar{A} - \log (A/2))} \rho^{\gamma_m} \|f_{\jtr,b}\|_{2}.
\end{equation}
Moreover, by \cite{Gu18}*{Lemmas 5.7, 7.4 and 7.5}, we have the following almost orthogonality
\begin{equation}
    \sum_{j,b} \|f_{\jtr,b}\|_2^2 \lesssim \sum_j\|f_\jtr\|_2^2 \lesssim \|f\|_2^2,
\end{equation}
and the equidistribution estimate
\begin{equation}
    \max_b \|f_{\jtr,b}\|_2^2 \lesssim R^{O(\delta_m)} \left(\frac{R^{1/2}}{\rho^{1/2}} \right)^{-(n+1-m)}\|f_\jtr\|_2^2.
\end{equation}
Therefore, we get
\begin{align}
    \sum_{j,b} \|f_{\jtr,b}\|_2^{p} 
    &\lesssim R^{O(\delta_m)} \left(\frac{R^{1/2}}{\rho^{1/2}} \right)^{-(n+1-m)(p/2-1)}\sum_{j}\|f_\jtr\|_2^{p} \notag \\
    &\lesssim R^{O(\delta_m)} \left(\frac{R^{1/2}}{\rho^{1/2}} \right)^{-(n+1-m)(p/2-1)}\|f\|_2^{p}.    \label{eq:temp3.12}
\end{align}

Combining \eqref{eq:temp3.8}, \eqref{eq:temp3.9} and \eqref{eq:temp3.12}, we get
$$
\sum_j \|Ef_\jtr \|_{BL_{A/2}^p(B_j;d\mu_R)}^p \lesssim R^{O(\delta_m)} \left (\rho^{m\varepsilon}\rho^{\delta(\log \Bar{A} - \log (A/2))} \rho^{\gamma_m}\right)^{p} \left(\frac{R^{1/2}}{\rho^{1/2}} \right)^{-(n+1-m)(p/2-1)}\|f\|_2^{p}.
$$
Recall that $R^{1/2+\delta_m} = \rho^{1/2+\delta_{m-1}}$. We have $\rho/R = R^{-O(\delta_{m-1})}.$ The terms with small parameter powers can be estimated by choosing $\delta \ll \delta_m \ll \varepsilon \delta_{m-1}$:
$$C R^{O(\delta_m)}\rho^{m\varepsilon}\rho^{\delta(\log \Bar{A} - \log (A/2))} \leq R^{m\varepsilon} R^{\delta(\log \bar{A} - \log(A))}. $$
To close the induction in the transverse sub-case, we want
$$
\rho^{p\gamma_m } \left(\frac{R^{1/2}}{\rho^{1/2}} \right)^{-(n+1-m)(p/2-1)} \leq R^{p\gamma_m } \quad \text{ for } p \geq p_m,
$$
which is equivalent to
\begin{equation}
    (n+1-m) (1/2-1/p_m) + 2 \gamma_m \geq 0.
\end{equation}

\subsubsection{Tangential sub-case}\label{sec:alg_case2} Our goal in this subsection is to estimate the terms
$$
\sum_j \|Ef_\jtg \|_{BL_{A/2}^p(B_j;d\mu_R)}^p 
$$
using induction on $R$ and $m$.

By the definition of $f_{\jtg}$, it is easy to check that $f_{\jtg}$ is concentrated in scale $\rho$ wave packets $T_{\Tilde{\theta},\Tilde{\nu}}$ which are $\rho^{-1/2+\delta_{m-1}}$-tangent to $Y$ in $B_j$, where $Y$ is an $m-1$ dimensional algebraic variety. The induction hypothesis at scale $\rho$ and dimension $m-1$ gives
\begin{equation}
    \|Ef_\jtg \|_{BL_{A/2}^{p_{m-1}}(B_j;d\mu_R)} \lesssim \rho^{(m-1)\varepsilon} \rho^{\delta(\log \Bar{A} - \log (A/2))} \rho^{\gamma_{m-1}} \|f_\jtg\|_2.
\end{equation}
On the other hand, we also have the weighted broad norm estimate from Corollary \ref{cor:LWRE}:
\begin{equation}
    \|Ef_\jtg \|_{BL_{A/2}^{p_{w,m}}(B_j;d\mu_R)} \lesssim \rho^{\varepsilon + O(\delta_{m-1})} \rho^{\gamma_{w,m}} \|f_\jtg\|_2.
\end{equation}

Let the pair $(\frac{1}{p_m},\gamma_m)$ lie on the line segment joining $(\frac{1}{p_{w,m}}, \gamma_{w,m})$ and $(\frac{1}{p_{m-1}}, \gamma_{m-1})$. For $p \geq p_m$, we get
\begin{equation}\label{eq:temp3.16}
    \|Ef_\jtg \|_{BL_{A/2}^{p}(B_j;d\mu_R)} \lesssim \rho^{(m-1)\varepsilon + O(\delta_{m-1})} \rho^{\delta(\log \Bar{A} - \log (A/2))} \rho^{\gamma_{m}} \|f_\jtg\|_2
\end{equation}
by logarithmic convexity, see \cite{Gu18}*{Lemma 4.2}. Here, we use a crude estimate for powers of $\rho$ with small parameters:
$$
\left (\rho^{(m-1)\varepsilon} \rho^{\delta(\log \Bar{A} - \log (A/2))} \right)^\alpha \cdot \left (\rho^{\varepsilon + O(\delta_{m-1})} \right)^{1-\alpha} \lesssim \rho^{(m-1)\varepsilon + O(\delta_{m-1})} \rho^{\delta(\log \Bar{A} - \log (A/2))}.
$$
for any $\alpha \in [0,1]$.

Taking the sum over all $B_j$ in $\ell^{p}$ norm, we obtain
\begin{align*}
    \left ( \sum_j \|Ef_\jtg \|_{BL_{A/2}^{p}(B_j;d\mu_R)}^{p} \right )^{1/p} 
    &\lesssim  \rho^{(m-1)\varepsilon + O(\delta_{m-1})} \rho^{\delta(\log \Bar{A} - \log (A/2))} \rho^{\gamma_{m}}\left ( \sum_j \|f_\jtg\|_2^{p} \right )^{1/p}  \\
    &\lesssim R^{(m-1)\varepsilon + \delta \log 2 + O(\delta_{m-1})} R^{\delta(\log \bar{A} - \log A)} R^{\gamma_m} \|f\|_2,
\end{align*}
where the last inequality follows from the fact that $\rho/R  = R^{-O(\delta_{m-1})}$ and the domination of the $\ell^{p}$ norm by the $\ell^2$ norm for $p > 2$. And by choosing $\delta \ll \delta_{m-1} \ll \varepsilon$, we have
$$
CR^{(m-1)\varepsilon + \delta \log 2 + O(\delta_{m-1})} \leq R^{m\varepsilon}.
$$

We have shown that the induction in the tangential sub-case is closed as long as $p\geq p_m$ and
\begin{equation}
    \text{the pair } (\frac{1}{p_m},\gamma_m) \text{ lies on the line segment joining } (\frac{1}{p_{w,m}}, \gamma_{w,m}) \text{ and } (\frac{1}{p_{m-1}}, \gamma_{m-1}).
\end{equation}

\subsection{Summary}
We summarize the conditions that $p_m$ and $\gamma_m$, $m\geq 3$, need to meet to close the induction:
\begin{itemize}
    \item Cellular case:
    \begin{equation}\label{eq:cond:cell}
        p_m \geq \frac{2m}{m-1};
    \end{equation}
    \item Algebraic case, transverse sub-case:
    \begin{equation}\label{eq:cond:trans}
        (n+1-m) (1/2-1/p_m) + 2 \gamma_m \geq 0;
    \end{equation}
    \item Algebraic case, tangential sub-case:
    \begin{equation}\label{eq:cond:tang}
    \text{the pair } (\frac{1}{p_m},\gamma_m) \text{ lies on the line segment joining } (\frac{1}{p_{w,m}}, \gamma_{w,m}) \text{ and } (\frac{1}{p_{m-1}}, \gamma_{m-1}).
    \end{equation}
\end{itemize}

\subsubsection{Admissiblility}
We first show that our choices of $p_m$ and $\gamma_m$ satisfy \eqref{eq:cond:cell} to \eqref{eq:cond:tang}. The conditions \eqref{eq:cond:cell} and \eqref{eq:cond:tang} follow immediately from the properties \eqref{eq:expo_prop1} and \eqref{eq:expo_prop3} respectively. It suffices to check the condition for the transverse sub-case.

Since $p_m \geq 2$, \eqref{eq:cond:trans} holds whenever $\gamma_m \geq 0$. Therefore, it suffices to consider the case $3\leq m \leq M$.

The case where $m=3$ can be verified directly:
$$
(n+1-m) (1/2-1/p_m) + 2 \gamma_m = (n+1 - 3) (1/2-1/3) + 2(2-n)/12 =0.
$$
By induction, we may assume that \eqref{eq:cond:trans} holds for $m-1$, which can be rearranged to
\begin{equation}\label{eq:temp3.27}
    (n+1-m) (1/2-1/p_{m-1}) + 2 \gamma_{m-1}  \geq  -(1/2-1/p_{m-1}) = -\frac{1}{2(m-1)}.
\end{equation}
On the other hand, from the definition of $p_{w,m}$ and $\gamma_{w,m}$, we have
\begin{equation}\label{eq:temp3.28}
    (n+1-m) (1/2-1/p_{w,m}) + 2 \gamma_{w,m}  = \frac{m-3}{m-2}.
\end{equation}
For $4\leq m \leq M$, we have
$$
    (\frac{1}{p_m},\gamma_m) =  \alpha(\frac{1}{p_{m-1}}, \gamma_{m-1}) + (1-\alpha) (\frac{1}{p_{w,m}}, \gamma_{w,m}),
$$
where 
$$\alpha = \frac{m^2-3m}{m^2-2m-2}.$$
Therefore, we can sum the $\alpha$ multiple of \eqref{eq:temp3.27} with the $(1-\alpha)$ multiple of \eqref{eq:temp3.28} to obtain:
\begin{align*}
    (n+1-m) (1/2-1/p_{m}) + 2 \gamma_{m}  &\geq -\frac{\alpha}{2(m-1)} + \frac{(1-\alpha)(m-3)}{m-2} \\
    &= \frac{(m-2)(m-3)}{2 m^3-6 m^2+4} \\ 
    &>0,
\end{align*}
where the last inequality follows from the following simple estimate on the denominator
$$
2m^3-6m^2 + 4  = 2m^2(m-3) + 4 > 0
$$
for $m\geq 4$. 

We have proved \eqref{eq:cond:trans}, and hence Proposition \ref{prop:induction}.

\subsubsection{Optimality} \label{sec:optimal} We now explain why these exponents give the smallest possible $p_{n+1}$ in Theorem \ref{thm:main_ext} using the methods and ingredients outlined in this paper. Our choice can be summarized as follows: we choose $p_m = \frac{2m}{m-1}$ saturating the constraint arising from the cellular case if the corresponding exponent $\gamma_m \leq 0$. 

In the figure below, we describe a typical scenario when picking the pair $(\frac{1}{p_m}, \gamma_m)$ lying on the line segment joining $(\frac{1}{p_{w,m}}, \gamma_{w,m})$ and $(\frac{1}{p_{m-1}}, \gamma_{m-1})$. The constraint arising from the cellular case \eqref{eq:cond:cell} requires $\frac{1}{p} \leq \frac{m-1}{2m}$ while that from the transverse case \eqref{eq:cond:trans} gives a lower bound on $\frac{1}{p}$ depending on $\gamma$. The pairs lying on the bold line between the points marked by \textit{cell} and \textit{trans} are admissible, satisfying all conditions \eqref{eq:cond:cell}-\eqref{eq:cond:tang}.

\begin{figure}[H]
    \centering
    \begin{tikzpicture}[scale = 0.8]
        \draw[->,thick] (-5,0)--(5.5,0) node[right]{$\gamma$};
        \draw[->,thick] (-5,0)--(-5,5.5) node[above]{$\frac{1}{p}$};
        \draw (0,0.2) -- (0,-0.2) node[anchor=north] {$0$};
        \draw (5,0.2) -- (5,-0.2) node[anchor=north] {$\frac{1}{2}$};
        \draw (-4.8, 5) -- (-5.2 , 5) node[anchor = east] {$\frac{1}{2}$};
        \draw (-4.8, 2.2) -- (-5.2 , 2.2) node[anchor = east] {$\frac{m-1}{2m}$};
        \draw[dashed] (-5,5 ) -- (5,5);
        \draw[dashed] (0,0 ) -- (0,5);

        \filldraw (-2,1.2) circle[radius=0.05] node[anchor=north] {\tiny $(\frac{1}{p_{m-1}} , \gamma_{m-1})$};
        \filldraw (2,4.2) circle[radius=0.05] node[anchor=south] {\tiny $(\frac{1}{p_{w,m}} , \gamma_{w,m})$} ;
        \filldraw (4,4.3) circle[radius=0.05] node[anchor=west] {\tiny $(\frac{1}{p_{w,m+1}} , \gamma_{w,m+1})$};
        \filldraw (-2/3,2.2) circle[radius=0.05] node[anchor=south east] {\tiny cell} ;
        \filldraw (-4/3,1.7) circle[radius=0.05] node[anchor= east] {\tiny trans};

        \draw[thin] (-2,1.2) -- (2,4.2);
        \draw[very thick] (-2/3,2.2) --  (-4/3,1.7); 
        \draw[dashed]  (-2/3,2.2) -- (4,4.3);
        \draw[dotted] (-4/3,1.7) -- (4,4.3);
        \draw[dashed] (-2/3,2.2) -- (-5,2.2);   
    \end{tikzpicture}
    \caption{An exaggerated diagram showing the relative positions of some important pairs of exponents}
\end{figure}
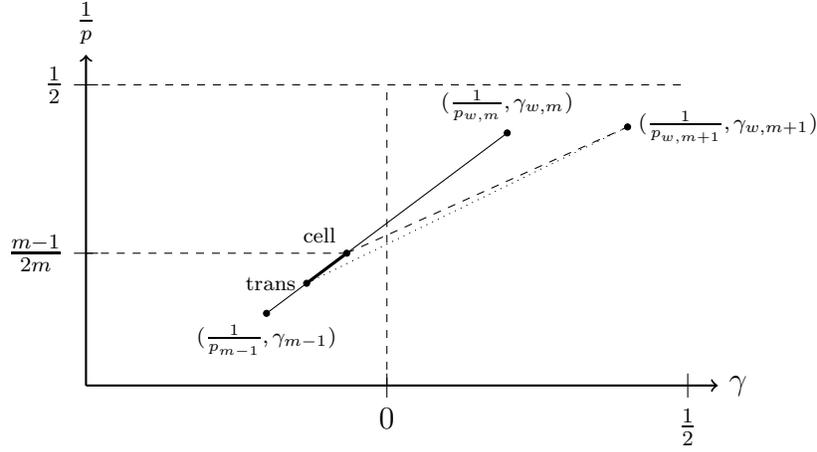

Note that the dashed line joining the point named \textit{cell} and $(\frac{1}{p_{w,m+1}} , \gamma_{w,m+1})$ is above the dotted line joining the point named \textit{trans} and $(\frac{1}{p_{w,m+1}}, \gamma_{w,m+1})$. This means that all estimates coming from the interpolation at the $m+1$ step will be better if we pick a larger $\frac{1}{p_m}$. Therefore, we should pick the exponents saturating the cellular constraint unless the corresponding $\gamma_m$ is positive, in which case we pick $p_m$ so that $\gamma_m=0$.

\bibliography{myreference}

\vspace{0.25cm}
	
\noindent Xiumin Du, Northwestern University, \textit{xdu@northwestern.edu}\\

\noindent Jianhui Li, Northwestern University, \textit{jianhui.li@northwestern.edu}

\end{document}